\newtheorem{theorem}{Theorem}[section]
\newtheorem{lemma}[theorem]{Lemma}
\newtheorem{Remark}[theorem]{Remark}
\newcommand{\cN}{\mbox{{${\cal N}$}}}
\newcommand{\cL}{\mbox{{${\cal L}$}}}
\newcommand{\cD}{\mbox{{${\cal D}$}}}
\newcommand{\cK}{\mbox{{${\cal K}$}}}
\begin{document}
		\title{Energy stable arbitrary order ETD-MS method for gradient flows with Lipschitz nonlinearity}
		
		\author{
			Wenbin Chen\thanks{
				Shanghai Key Laboratory for Contemporary Applied Mathematics,
				School of Mathematical Sciences; Fudan University, Shanghai, China 200433 ({\tt wbchen@fudan.edu.cn})}
			\and
			Shufen Wang\thanks{
				School of Mathematical Sciences; Fudan University, Shanghai, China 200433 ({\tt 17110180015@fudan.edu.cn})}
			\and
			Xiaoming Wang\thanks{SUSTech International Center for Mathematics and Department of Mathematics and  and Guangdong Provincial Key Laboratory of Computational Science and Material Design and National Center for Applied Mathematics Shenzhen, Southern University of Science and Technology, Shenzhen, China 518055, PRC  ({\tt wangxm@sustech.edu.cn}), corresponding author.}
		}	
	
	
	
	
	\maketitle
			
		\begin{abstract}
			We present a methodology to construct efficient high-order in time accurate numerical schemes for a class of gradient flows with appropriate Lipschitz continuous nonlinearity. There are several ingredients to the strategy: the exponential time differencing (ETD), the multi-step (MS) methods, the idea of stabilization, and the technique of interpolation. They are synthesized  to develop a generic $k^{th}$ order in time efficient linear numerical scheme with the help of an artificial regularization term of the form $A\tau^k\frac{\partial }{\partial t}\cL^{p(k)} u$ where $\cL$ is the positive definite linear part of the flow, $\tau$ is the uniform time step-size.  The exponent $p(k)$ is determined explicitly by the strength of the Lipschitz nonlinear term in relation to $\cL$ together with the desired temporal order of accuracy $k$. To validate our theoretical analysis,  the thin film epitaxial growth without slope selection model is examined with a fourth-order ETD-MS discretization in time { and} Fourier pseudo-spectral in space discretization. Our numerical results on  convergence and energy stability are in accordance with our theoretical results.
		\end{abstract}
		\textbf{AMS subject classifications:} 65M12, 65M70, 65Z05 \\
		\textbf{Key words:} Gradient flow, {e}pitaxial thin film growth, {e}xponential time differencing, {l}ong time energy stability, {a}rbitrary order scheme, {m}ulti-step method.
	
		\section{Introduction} \label{sec1}
		
		Many natural and engineering processes are gradient flows in the sense that the time evolution of the system is in the direction of decreasing certain energy functional associated with the state of the system. They have a wide range of applications in materials science, fluid dynamics \cite{allen1979microscopic,anderson1998diffuse,cahn1958free,doi1988theory,elder2002modeling,gurtin1996two,leslie1979theory,yue2004diffuse} as well as in geometry (geometric flows) and PDEs (optimal transportation) \cite{AmbrosioGigliSavare2008} among many others.
		The evolution of these gradient flows could be complicated. Efficient and accurate numerical methods are highly desirable, especially in the generic case of the absence of solution formula. In addition, the evolution process could be long before it settles to certain equilibrium state(s), see for instance \cite{kohn2003upper}. Hence, it  is of great importance to design numerical methods that inherits the energy law of the gradient flow, even if in a slightly modified form,  if one is interested in the long evolution process such as the coarsening process associated with many phase field models at the  large system size regime.

		An abundant work exists in the development of (energy) long-time stable schemes  and numerical simulation of various gradient flows arising in material science and fluid dynamics using convex splitting, truncation, SAV, and IEQ method among others, see for instance  \cite{wang2010Unconditionally,shen2012second,elliott1993global,eyre1998unconditionally,shen2010numerical,shin2017unconditionally,shen2018scalar,shen2019new,gong2020arbitrarily}, and the references therein.
		Exponential time differencing (ETD) is a very appealing time discretization method which achieves its high-order accuracy in time with exact treatment of the linear part \cite{beylkin1998new,du2004stability,du2005analysis} together with Duhammel's principle applied to the nonlinear term. The introduction of integrating factor gives rise to a nonlinear integral term.  There are two popular approaches in approximating the nonlinear part: Runge-Kutta (RK) method \cite{cox2002exponential,hochbruck2005explicit,hochbruck2010exponential} and multi-step(MS) method \cite{hochbruck2010exponential,hochbruck2011exponential,hairer1993n}.  Abundant applications of these two methods to various gradient models can be found in \cite{Asamani2020second,chen2020stabilized, chen2020energy,cheng2019third, Du2019Maximum, huang2019fast, ju2014, ju2015a, ju2015b, wang2016Efficient} . The approximations are usually explicit in order to preserve the efficiency of the ETD method.
		While it is relatively straightforward to construct numerical  schemes of arbitrary order formally via RK or MS method, the energy stability of the algorithms are nontrivial since the explicit treatment of the nonlinear term induces instability, and the existing works involving rigorous energy stability analysis for high accuracy scheme are limited. The authors in \cite{ju2018energy} proved the energy stability only for the first order ETD scheme for the thin film epitaxial growth model without slope selection (NSS).
		In a recent work,  the idea of stabilization was utilized to develop a second order in time ETD-MS method that is energy stable with the aid of a judiciously chosen stabilizing term of matching order for the NSS equation \cite{chen2020stabilized},  it's the first work to provide both the energy stability and convergence results for phase field models theoretically. This idea has been developed further in \cite{chen2020energy} for a third order in time ETD-MS energy stable scheme for the NSS model, and to even higher orders without detailed proof. Another third-order stabilized energy stable ETD-MS scheme proposed in \cite{cheng2019third} to approximate the NSS equation also gave a rigorous energy stability analysis, in which a different form of stabilized term $A\tau^2\Delta_N^2(u^{n+1}-u^n)$ was added with $A=\mathcal{O}(\epsilon^{-2})$ to guarantee energy stability.
		The purpose of this manuscript is to present a systematic approach to construct ETD-MS based energy stable schemes of arbitrary order in time with the help of an appropriate stabilizing term for a class of gradient flows on a Hilbert space with a positive linear part and a mild nonlinear part satisfying Lipschitz condition in some suitable sense. More importantly, we can make the stabilized coefficient $A$ independent of the small parameter $\epsilon$ or time step-size $\tau$ (i.e., of order $\mathcal{O}(1)$) by a proper choice of $p(k)$ (the spatial order of stabilized term). So far as we know, this is the first result on unconditionally energy stable, arbitrary-order ETD-MS based efficient algorithm for this type of gradient flow.
		
		Let $E(u)$ be a suitable energy functional on a Hilbert space $H$ (with the domain being a subspace of $H$), and let $\frac{\delta E}{\delta u}$ be the variational derivative of $E$ with respect to $u$ in $H$. The associated gradient flow with mobility $M\ge 0$ can then be formulated as follows
		\begin{align} \label{eq:gradient flow}
			\frac{\partial u}{\partial t}  = -M\frac{\delta E}{\delta u} .
		\end{align}
		Taking inner product with \eqref{eq:gradient flow} and $\frac{\delta E}{\delta u}$ on space $H$ we formally arrive at the following  energy equality:
		\begin{align} \label{eq:energy decay GF}
			\frac{dE(u)}{dt} = -M\left\|\frac{\delta E}{\delta u}\right\|^2_H\le 0.
		\end{align}
		This implies that the flow always evolves in the direction of decreasing energy. 
		Notice that both the classical Allen-Cahn model and the Cahn-Hilliard model fall into this framework with the same Ginzburg-Landau free energy but different Hilbert space $H$.
		
		The variational derivative of $E$ can be split into two parts, a linear part $\cL u$, and a nonlinear one $\cN(u)$.  In additional, large system size for models arising in material sciences often corresponds the existence of a small parameter in front of the linear term once we non-dimensionalize the system. Formulating the system as an abstract ODE in the Hilbert space $H$ or an ODE system after performing appropriate spatial discretization (say Fourier pseudo-spectral in the spatially periodic case) we have
		\begin{align} \label{eq:gf}
		\frac{d u}{d t} =  - \epsilon \cL u  + \cN(u).
		\end{align}
		
		We will impose the following two assumptions on \eqref{eq:gf} :
		\begin{enumerate}
			\item  The operator $\cL$ is non-negative. \\
			Thus we can define operators  $\cL^{\alpha/2}$ for any $\alpha\ge0$. The domain of operator $\cL^{\alpha/2}$ is denoted by $V^{\alpha} $. For $\alpha=1$ and $\alpha=0$,  it is abbreviated as $\cD(\cL^{\frac12}) = V$ and $\cD(\cL^0) = H$, respectively;
			\item The nonlinear term is Lipschitz continuous in the sense that: $\exists \beta>0, \gamma>0,C_L>0$ such that
			\begin{align} \label{eq:LIP}
				\left\| \cN(u) - \cN(v) \right\|_{V^{-\beta}} \le C_L \left\| u-v\right\|_{V^\gamma}, \forall u, v \in V^\gamma,
			\end{align}
			{where $V^{-\beta}$ is the dual space to $V^{\beta}$ with the duality induced by the inner product on $H$}.
		\end{enumerate}
		The case when $\cL$ is only semi{-}positive definite or positive definite minus a constant multiply of the identity operator can be treated after we shift the $\cL$ so that it is positive definite, and modify the ``nonlinear'' term accordingly (now it may contain a linear part).
		
	A typical gradient flow that fits into this abstract framework is the thin film epitaxy growth equation without slope selection (see  \cite{golubovic1997interfacial,moldovan2000interfacial,kohn2003upper,li2003thin,li2004epitaxial} and references therein):
	\begin{align} \label{eq:nss}
	\frac{\partial u}{\partial t}=-\epsilon \Delta^{2} u-\nabla \cdot\left(\frac{\nabla u}{1+|\nabla u|^{2}}\right)
	\end{align}
	with the energy functional given by
	\begin{align} \label{eq:SS energy functional}
		E(u)=\int_{\Omega} \left( \frac{\epsilon}{2}|\Delta u|^{2}-\frac{1}{2} \ln \left(1+|\nabla u|^{2}\right)  \right) \mathrm{d} \mathbf{x},
	\end{align}
	and the Hilbert space $H=L^2$, the linear and nonlinear operators are $\cL = \Delta^2$, $\cN(u) = -\nabla\cdot \left( \frac{\nabla u}{1+|\nabla u|^2}\right)$.
	
	Most gradient flows do not fit into this framework directly. However, many of them enjoy invariant regions in the phase space that attract all solutions such as the Cahn-Hilliard equation, the Allen-Cahn equation and many other reaction-diffusion equations, and the two dimensional Navier-Stokes equations \cite{Temam1997Infinite} among others. For these models, we could modify the nonlinear term with the help of an appropriate truncation to derive a ``prepared'' equation as in the theory of inertial manifolds so that certain Lipschitz continuity condition is satisfied \cite{Temam1997Infinite}. Hence the framework is quite general.
	
	The main contribution of this manuscript is the construction and energy stability analysis of ETD-MS based numerical schemes of arbitrary order for \eqref{eq:gf} under the two assumptions postulated above, with an appropriate stabilization term of the form $A\tau^k \frac{d}{dt} \cL^{p(k)} u(t) $ for $k^{th}$ order scheme where $A$ is independent of the small parameter $\epsilon$ or the spatial/temporal discretization. The exponent $p(k)$ is determined explicitly independent of $\epsilon$ or the temporal/spatial grids. These algorithms are highly efficient since only one fixed positive definite constant coefficient elliptic problem needs to be solved at each time step. So far as we know, this is the first time such an efficient systematic approach is presented although it was alluded to in \cite{chen2020energy}.
	
	The rest of the manuscript is organized as follows. We present the numerical scheme together with the energy stability analysis in section 2. Numerical results are presented in section 3. Concluding remarks are offered in section 4.
	
	\section{Numerical scheme} \label{sec3}
	In this section, we propose a temporal semi-discrete scheme for \eqref{eq:gf}, in which a generic $k^{th}$ order approximation in time is constructed by applying the exponential time differencing and multi-step method.  The nonlinear term $\cN(u)$ is treated explicitly and the Lagrange approximation is adopted, leading to a  linear system.  However, higher order multi-step treatment to nonlinear term gives  rise to strong instability, thus to keep the energy decaying property, a $k^{th}$ order regularization term of the form of $A\tau^k \frac{d}{dt} \cL^{p(k)} u(t) $ is added with a careful choice of exponent $p(k)$.

\subsection{The algorithm}	
	Denote the discrete numerical solution at $t=t^n$ by $u^n$. The differential form of the numerical scheme for \eqref{eq:gf} is to find $u^{n+1}(t):[t_n,t_{n+1}]\rightarrow$ an appropriate subspace of $H$ such that
	\begin{align} \label{eq:etdmsns}
			\frac{du^{n+1}(t)}{dt} + \epsilon \cL u^{n+1}(t) + A\tau^k \frac{d}{dt} \cL^{p(k)} u^{n+1}(t) = \sum_{i=0}^{k-1} \ell_i(t-t_n) \cN(u^{n-i}),
	\end{align}
	where  $\ell_i(s)$ are the shifted (to the negative range) Lagrange basis polynomial of degree $k$ with the form of
	\begin{align} \label{eq:Lagrange basis}
		\ell_i(s) = \prod_{0\le m\le k-1 \atop m\neq i} \frac{m\tau+s}{(m-i)\tau} =  \prod_{0\le m\le k-1 \atop m\neq i} \frac{m +s/\tau}{m-i}  =  \sum_{j=0}^{k-1} \xi_{i,j} s^j,
	\end{align}
	where $\{ \xi_{i,j} \}_{j=0}^{k-1}$ are coefficients of the polynomial $\ell_{i}(s)$. Obviously, $\xi_{i,j} \sim \mathcal{O}(\tau^{-j})$. This property will be used later in the stability analysis. 
	
	Introducing an integrating factor $e^{\cK t}$ with $\cK=\epsilon\left(I+A\tau^k\cL^{p(k)}\right)^{-1}\cL$ and integrating the equation \eqref{eq:etdmsns} from $t^n$ to $t^{n+1}$ leads to the following $k^{th}$ order ETD-MS scheme
	\begin{align} \label{eq:etdmsns 1}
		u^{n+1} =& e^{-\cK \tau}u^n + \sum_{i=0}^{k-1}\int_{0}^{\tau} e^{-\cK (\tau -s)} \ell_i(s)  ds\, \cN(u^{n-i}).
	\end{align}
	Since the Lagrange interpolation polynomials and the linear operator $\cL$ are known a priori, the algorithm can be simplified. Let $\phi_j:=\int_{0}^{\tau} e^{-\cK (\tau -s)}  s^j ds$ which can be calculated (explicitly) before hand,  it can be shown via recurrence formula
	\begin{equation} \label{eq:phi recur}
	\left\{
		\begin{aligned}
		\phi_0 = & \cK^{-1}\left(I - e^{-\cK\tau}\right), \\
		\phi_j = & \cK^{-1}\left(\tau^j -j\phi_{j-1} \right), \quad 1\le j \le k-1.
		\end{aligned} \right.
	\end{equation}
	Thus the $k^{th}$ order ETD-MS scheme \eqref{eq:etdmsns 1} can be rewritten as
	\begin{align} \label{eq:etdmsns 2}
	u^{n+1} = e^{-\cK \tau}u^n + \sum_{i=0}^{k-1}\sum_{j=0}^{k-1} \xi_{i,j}\phi_j\,\cN(u^{n-i}).
	\end{align}
	This is an extremely efficient algorithm, especially since we could pre-calculate the operators involved.

	\subsection{Energy stability} \label{sec31}
	In this subsection, we establish the energy stability for the scheme \eqref{eq:etdmsns}. First we present an interpolation estimation that will be used later.
	\begin{lemma} \label{lem:interpolation}
		For any $u\in V^{\beta}$ and $v\in V^{\gamma}$, $p(k)>\max\{\beta,\gamma\}, q\in (0,1)$, and any constants $\hat{C}, \tilde{C}$ to be determined for specific problem, the following inequality holds
		\begin{align} \label{eq:bad term}
		\tau \left\| \frac{du}{dt}\right\|_{V^\beta} \left\| \frac{dv}{dt}\right\|_{V^\gamma} \le C_1 \left\| \frac{du}{dt}\right\|_{H}^{2}  + C_2 \tau^{\frac{2qp(k)}{\beta}} \left\|\frac{du}{dt}\right\|_{V^{p(k)}}^{2} + C_3 \left\| \frac{dv}{dt}\right\|_{H}^{2} + C_4 \tau^{\frac{2(1-q)p(k)}{\gamma}} \left\| \frac{dv}{dt}\right\|_{V^{p(k)}}^{2},
		\end{align}
		where
		\begin{equation} \label{eq:c1c2c3c4}
			\begin{aligned}
			C_1(\hat{C}) &= \frac{1-\beta/p(k)}{2} \hat{C}^{1/(1-\beta/p(k))}, \quad C_2(\hat{C}) = \frac{\beta}{2p(k)}\hat{C}^{-p(k)/\beta}, \\
			C_3(\tilde{C}) &= \frac{1-\gamma/p(k)}{2} \tilde{C}^{1/(1-\gamma/p(k))}, \quad C_4(\tilde{C}) = \frac{\gamma}{2p(k)}\tilde{C}^{-p(k)/\gamma}.
			\end{aligned}
		\end{equation}
	\end{lemma}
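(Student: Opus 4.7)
The plan is to combine a fractional interpolation inequality with two applications of Young's inequality, plus a splitting $\tau=\tau^q\cdot\tau^{1-q}$ to distribute the time-step factor between the two functions.

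First, I would establish the fractional interpolation bound
\begin{equation*}
\|w\|_{V^s}\le \|w\|_H^{\,1-s/p(k)}\,\|w\|_{V^{p(k)}}^{\,s/p(k)},\qquad 0\le s\le p(k),
\end{equation*}
which follows directly from the spectral representation of the non-negative operator $\cL$ together with Hölder's inequality on the spectral measure (this interprets $\|w\|_{V^s}=\|\cL^{s/2}w\|_H$ and applies Hölder with exponents $p(k)/(p(k)-s)$ and $p(k)/s$). I would apply this with $s=\beta$ to $du/dt$ and with $s=\gamma$ to $dv/dt$ (allowed because $p(k)>\max\{\beta,\gamma\}$, so both exponents lie in $(0,1)$).

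Next, I would multiply the two interpolation bounds and split $\tau=\tau^q\cdot \tau^{1-q}$ to write
\begin{equation*}
\tau\Bigl\|\tfrac{du}{dt}\Bigr\|_{V^\beta}\Bigl\|\tfrac{dv}{dt}\Bigr\|_{V^\gamma}
\le \Bigl(\tau^q\|u_t\|_H^{1-\beta/p(k)}\|u_t\|_{V^{p(k)}}^{\beta/p(k)}\Bigr)\Bigl(\tau^{1-q}\|v_t\|_H^{1-\gamma/p(k)}\|v_t\|_{V^{p(k)}}^{\gamma/p(k)}\Bigr).
\end{equation*}
A first use of the elementary Young inequality $ab\le\tfrac12 a^2+\tfrac12 b^2$ decouples the $u$- and $v$-pieces, producing the two factors $\tfrac12\tau^{2q}\|u_t\|_H^{2(1-\beta/p(k))}\|u_t\|_{V^{p(k)}}^{2\beta/p(k)}$ and $\tfrac12\tau^{2(1-q)}\|v_t\|_H^{2(1-\gamma/p(k))}\|v_t\|_{V^{p(k)}}^{2\gamma/p(k)}$.

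Then, on each piece, I would apply Young's inequality a second time, now with conjugate exponents $r=1/(1-\beta/p(k))$, $r'=p(k)/\beta$ (respectively $r=1/(1-\gamma/p(k))$, $r'=p(k)/\gamma$), introducing a free weight $\hat{C}$ (respectively $\tilde{C}$). A direct computation of the resulting prefactors gives exactly $(1-\beta/p(k))\hat{C}^{1/(1-\beta/p(k))}/2$ in front of $\|u_t\|_H^2$ and $(\beta/(2p(k)))\hat{C}^{-p(k)/\beta}$ in front of $\tau^{2qp(k)/\beta}\|u_t\|_{V^{p(k)}}^2$, matching $C_1, C_2$ in \eqref{eq:c1c2c3c4}; the analogous choice for the $v$-piece produces $C_3, C_4$.

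None of these steps is genuinely difficult: the only mildly subtle point is keeping the bookkeeping of the exponents consistent so that the $\tau$-powers combine to $\tau^{2qp(k)/\beta}$ and $\tau^{2(1-q)p(k)/\gamma}$ respectively, and so that the free constants $\hat{C},\tilde{C}$ come out with exponents $1/(1-\beta/p(k))$ and $-p(k)/\beta$ (respectively with $\gamma$ in place of $\beta$). The underlying interpolation inequality is the one conceptual ingredient; the rest is a two-step Young's inequality argument.
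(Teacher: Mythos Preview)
Your proposal is correct and follows essentially the same route as the paper: interpolation $\|w\|_{V^s}\le\|w\|_H^{1-s/p(k)}\|w\|_{V^{p(k)}}^{s/p(k)}$ applied to each factor, a first Young's inequality $ab\le\tfrac12 a^2+\tfrac12 b^2$ with the splitting $\tau=\tau^q\cdot\tau^{1-q}$ to decouple the $u$- and $v$-pieces, then a second weighted Young's inequality on each piece with the stated conjugate exponents. Your spectral justification of the interpolation step is a nice addition that the paper leaves implicit.
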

	\begin{proof}
		Using the interpolation inequality to control $\|\cdot\|_{V^\beta}$ by $\|\cdot\|_{H}$ and $\|\cdot\|_{V^{p(k)}}$:
		\begin{align} \label{eq:interpolation}
		\tau \left\| \frac{du}{dt}\right\|_{V^\beta} \left\| \frac{dv}{dt}\right\|_{V^\gamma} \le \tau \left\| \frac{du}{dt}\right\|_{H}^{1-\beta/p(k)}  \left\| \frac{du}{dt}\right\|_{V^{p(k)}}^{\beta/p(k)} \cdot \left\| \frac{dv}{dt}\right\|_{H}^{1-\gamma/p(k)}  \left\| \frac{dv}{dt}\right\|_{V^{p(k)}}^{\gamma/p(k)} .
		\end{align}
		Denote the term in the right hand side (RHS) of \eqref{eq:interpolation} by $I_1$, then we use the Young's inequality twice to show
		\begin{align} \label{eq:bad term2}
		I_1 \le &  \frac12 \tau^{2q} \left\| \frac{du}{dt}\right\|_{H}^{2-2\beta/p(k)}  \left\| \frac{du}{dt}\right\|_{V^{p(k)}}^{2\beta/p(k)}  + \frac12 \tau^{2-2q} \left\| \frac{dv}{dt}\right\|_{H}^{2-2\gamma/p(k)}  \left\| \frac{dv}{dt}\right\|_{V^{p(k)}}^{2\gamma/p(k)} \nonumber \\
		\le & \frac{1-\beta/p(k)}{2}\left(\hat{C} \left\| \frac{du}{dt}\right\|_{H}^{2-2\beta/p(k)}  \right)^{1/(1-\beta/p(k))}  + \frac{\beta}{2p(k)}\left(  \frac{\tau^{2q}}{\hat{C}}\left\| \frac{du}{dt}\right\|_{V^{p(k)}}^{2\beta/p(k)} \right)^{p(k)/\beta} \nonumber \\
		& + \frac{1-\gamma/p(k)}{2}\left( \tilde{C}\left\| \frac{dv}{dt}\right\|_{H}^{2-2\gamma/p(k)}  \right)^{1/(1-\gamma/p(k))}  + \frac{\gamma}{2p(k)}\left( \frac{\tau^{2-2q}}{\tilde{C}} \left\| \frac{dv}{dt}\right\|_{V^{p(k)}}^{2\gamma/p(k)} \right)^{p(k)/\gamma} \nonumber \\
		= & C_1 \left\| \frac{du}{dt}\right\|_{H}^{2}  + C_2 \tau^{\frac{2qp(k)}{\beta}} \left\|\frac{du}{dt}\right\|_{V^{p(k)}}^{2} + C_3 \left\| \frac{dv}{dt}\right\|_{H}^{2} + C_4 \tau^{\frac{2(1-q)p(k)}{\gamma}} \left\| \frac{dv}{dt}\right\|_{V^{p(k)}}^{2}.
		\end{align}
		This completes the proof of Lemma~\ref{lem:interpolation}.
	\end{proof}
	
	Now we come to the energy stability. For simplicity, we denote $\|\cdot\|_{L^2(t_i,t_j;V^{\alpha})}$ by $\|\cdot\|_{L^2(I_{i,j};V^{\alpha})}$ hereafter.
	\begin{lemma} \label{lem:energy estimate1}
		For system \eqref{eq:etdmsns}, the following energy estimation establishes:
		\begin{align} \label{eq:energy estimate 1}
		&E(u^{n+1}) - E(u^{n})+	\left\| 	\frac{du^{n+1}(t)}{dt} \right\|_{L^2(I_{n,n+1};H)}^2 + A\tau^k \left\| 	\frac{du^{n+1}(t)}{dt}\right\|_{L^2(I_{n,n+1};V^{p(k)})}^2  \nonumber \\
		\le & \sum_{j=0}^{k-1}C_L  \tau^{\frac12}\left\| 1- \sum_{i=-1}^{j-1}\ell_i(t-t_n) \right\|_{L^2(I_{n,n+1})} \left\| \frac{du^{n-j+1}(t)}{dt} \right\|_{L^2(I_{n-j,n-j+1};V^{\gamma})} \left\| \frac{du^{n+1}(t)}{dt} \right\|_{L^2(I_{n,n+1};V^{\beta})},
		\end{align}
		where the convention $\ell_{-1}(t-t_n)=0$ has been used.
	\end{lemma}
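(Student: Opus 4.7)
The plan is to pair the differential scheme \eqref{eq:etdmsns} with $\tfrac{du^{n+1}(t)}{dt}$ in the $H$-inner product and integrate over $[t_n,t_{n+1}]$. The first term yields $\|\tfrac{du^{n+1}}{dt}\|_{L^2(I_{n,n+1};H)}^2$, the $\cL$-term yields $\tfrac{\epsilon}{2}\bigl(\|\cL^{1/2}u^{n+1}\|_H^2-\|\cL^{1/2}u^n\|_H^2\bigr)$, and the stabilizer yields $A\tau^k\|\tfrac{du^{n+1}}{dt}\|_{L^2(I_{n,n+1};V^{p(k)})}^2$. On the right, since the shifted Lagrange basis reproduces constants, $\sum_{i=0}^{k-1}\ell_i(t-t_n)\equiv 1$, so I add and subtract $\cN(u^{n+1}(t))$:
\begin{equation*}
\sum_{i=0}^{k-1}\ell_i(t-t_n)\cN(u^{n-i}) = \cN(u^{n+1}(t)) + \sum_{i=0}^{k-1}\ell_i(t-t_n)\bigl[\cN(u^{n-i})-\cN(u^{n+1}(t))\bigr].
\end{equation*}
Writing $E=\tfrac{\epsilon}{2}\|\cL^{1/2}\cdot\|_H^2+G$ with $\delta G/\delta u=-\cN$ (as \eqref{eq:gf} is a gradient flow of $E$), the pairing of $\cN(u^{n+1}(t))$ with $\tfrac{du^{n+1}}{dt}$ combines with the $\cL$-contribution to form the total derivative $\tfrac{d}{dt}E(u^{n+1}(t))$, integrating to the term $E(u^{n+1})-E(u^n)$ on the left-hand side.

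It remains to bound the residual $R:=\int_{t_n}^{t_{n+1}}\sum_{i=0}^{k-1}\ell_i(t-t_n)\langle\cN(u^{n-i})-\cN(u^{n+1}(t)),\tfrac{du^{n+1}(t)}{dt}\rangle_H\,dt$. For each $i$ I telescope
\begin{equation*}
\cN(u^{n-i})-\cN(u^{n+1}(t)) = \bigl[\cN(u^n)-\cN(u^{n+1}(t))\bigr] + \sum_{l=1}^{i}\bigl[\cN(u^{n-l})-\cN(u^{n-l+1})\bigr],
\end{equation*}
and swap orders of summation, so the weight multiplying the $l$-th term becomes $\sum_{i=l}^{k-1}\ell_i(t-t_n) = 1-\sum_{i=0}^{l-1}\ell_i(t-t_n)$, which is exactly the factor $1-\sum_{i=-1}^{j-1}\ell_i(t-t_n)$ in the stated bound with $j=l$ (the convention $\ell_{-1}=0$ absorbing the $l=0$ case into a single template). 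For each $l\ge 1$, I apply $V^{-\beta}$--$V^{\beta}$ duality, then the Lipschitz hypothesis \eqref{eq:LIP} together with the identity $u^{n-l+1}-u^{n-l}=\int_{t_{n-l}}^{t_{n-l+1}}\tfrac{du^{n-l+1}}{ds}\,ds$ and Cauchy--Schwarz in time:
\begin{equation*}
\|\cN(u^{n-l})-\cN(u^{n-l+1})\|_{V^{-\beta}} \le C_L\|u^{n-l+1}-u^{n-l}\|_{V^{\gamma}} \le C_L\tau^{1/2}\Bigl\|\tfrac{du^{n-l+1}}{ds}\Bigr\|_{L^2(I_{n-l,n-l+1};V^{\gamma})};
\end{equation*}
a second Cauchy--Schwarz in $t$ on the $V^{\beta}$-factor yields $\|1-\sum_{i=-1}^{l-1}\ell_i\|_{L^2(I_{n,n+1})}\cdot\|\tfrac{du^{n+1}}{dt}\|_{L^2(I_{n,n+1};V^{\beta})}$. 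The $l=0$ contribution is analogous, bounded via $\|\cN(u^n)-\cN(u^{n+1}(t))\|_{V^{-\beta}}\le C_L\|\int_{t_n}^{t}\tfrac{du^{n+1}}{ds}\,ds\|_{V^{\gamma}}\le C_L\tau^{1/2}\|\tfrac{du^{n+1}}{ds}\|_{L^2(I_{n,n+1};V^{\gamma})}$, and a Cauchy--Schwarz in $t$ then delivers the $j=0$ bound $C_L\tau\|\tfrac{du^{n+1}}{dt}\|_{L^2(I_{n,n+1};V^{\gamma})}\|\tfrac{du^{n+1}}{dt}\|_{L^2(I_{n,n+1};V^{\beta})}$, matching the stated $j=0$ term once we note $\|1\|_{L^2(I_{n,n+1})}=\tau^{1/2}$.

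The main obstacle is the combinatorial book-keeping of the telescoping and re-summation: arranging the weights so that they line up uniformly with the factor $1-\sum_{i=-1}^{j-1}\ell_i$ in the statement for every $j$, and matching the continuous $l=0$ piece (involving $\cN(u^n)-\cN(u^{n+1}(t))$, which varies in $t$) with the discrete jump template $l\ge 1$. Once this indexing is set up correctly, the analytic steps are routine: duality in $V^{-\beta}\times V^{\beta}$, the Lipschitz hypothesis \eqref{eq:LIP}, and repeated Cauchy--Schwarz in time.
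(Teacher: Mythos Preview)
Your proposal is correct and follows essentially the same route as the paper: pair \eqref{eq:etdmsns} with $\tfrac{du^{n+1}}{dt}$, recognize the sum $(\epsilon\cL u^{n+1},\tfrac{du^{n+1}}{dt})-(\cN(u^{n+1}(t)),\tfrac{du^{n+1}}{dt})=\tfrac{d}{dt}E(u^{n+1}(t))$, use $\sum_i\ell_i\equiv 1$, telescope each $\cN(u^{n-i})-\cN(u^{n+1}(t))$ into consecutive jumps, swap the order of summation so that the weight on the $j$-th jump becomes $1-\sum_{i=0}^{j-1}\ell_i$, and then apply the Lipschitz bound \eqref{eq:LIP} with Cauchy--Schwarz in time. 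The only cosmetic difference is that you make the decomposition $E=\tfrac{\epsilon}{2}\|\cL^{1/2}\cdot\|_H^2+G$ explicit, whereas the paper writes $\tfrac{d}{dt}E(u^{n+1}(t))$ directly in one step.
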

	\begin{proof}
		To establish the desired energy estimates, we take inner product of \eqref{eq:etdmsns} with $	\frac{du^{n+1}(t)}{dt}$, which gives
		\begin{align} \label{eq:energy stability1}
		&	\left\| 	\frac{du^{n+1}(t)}{dt} \right\|_H^2 + A\tau^k \left\| 	\frac{du^{n+1}(t)}{dt}\right\|_{V^{p(k)}}^2 + \frac{d}{dt} E(u^{n+1}(t)) \nonumber \\
		=& \left( \sum_{i=0}^{k-1} \ell_i(t-t_n) \cN(u^{n-i}) - \cN(u^{n+1}(t)), \frac{du^{n+1}(t)}{dt}\right)_H.
		\end{align}
		Integrating from $t_n$ to $t_{n+1}$ gives
		\begin{align} \label{eq:energy stability2}
		&	\left\| 	\frac{du^{n+1}(t)}{dt} \right\|_{L^2(I_{n,n+1};H)}^2 + A\tau^k \left\| 	\frac{du^{n+1}(t)}{dt}\right\|_{L^2(I_{n,n+1};V^{p(k)})}^2 + E(u^{n+1}) - E(u^{n}) \nonumber \\
		=& \int_{t_n}^{t_{n+1}}\left( \sum_{i=0}^{k-1} \ell_i(t-t_n) \cN(u^{n-i}) - \cN(u^{n+1}(t)), \frac{du^{n+1}(t)}{dt}\right)_H dt.
		\end{align}
		
		Note that the sum of the Lagrange basis functions equals one, i.e., $\sum_{i=0}^{k-1} \ell_i(t-t_n) = 1$, thus terms within the integral in RHS of \eqref{eq:energy stability1} (denoted by NLT) can be rewritten as
		\begin{align} \label{eq:NLT}
		\text{NLT} = &  \left( \sum_{i=0}^{k-1} \ell_i(t-t_n) 　 \left( \cN(u^{n-i}) - \cN(u^{n+1}(t)) \right), \frac{du^{n+1}(t)}{dt}\right)_H \nonumber \\
		= &  \left( \sum_{i=0}^{k-1} \ell_i(t-t_n) 　 \left( \cN(u^{n-i}) - \cN(u^{n-i+1}) + \cdots+ \cN(u^{n})  -\cN(u^{n+1}(t)) \right), \frac{du^{n+1}(t)}{dt}\right)_H \nonumber \\
		= & \left( \sum_{i=0}^{k-1} \ell_i(t-t_n) 　 \left( \cN(u^{n}) - \cN(u^{n+1}(t)) \right), \frac{du^{n+1}(t)}{dt}\right)_H \nonumber \\
		& + \sum_{j=1}^{k-1}\left(  \sum_{i=j}^{k-1}\ell_{i}(t-t_n) 　 \left( \cN(u^{n-j}) - \cN(u^{n-j+1}) \right), \frac{du^{n+1}(t)}{dt}\right)_H \nonumber \\
		= & \left(   \cN(u^{n}) - \cN(u^{n+1}(t)), \frac{du^{n+1}(t)}{dt}\right)_H \nonumber \\
		& +\sum_{j=1}^{k-1} \left( \left(1- \sum_{i=0}^{j-1}\ell_i(t-t_n) \right)　　 \left( \cN(u^{n-j}) - \cN(u^{n-j+1}) \right), \frac{du^{n+1}(t)}{dt}\right)_H.
		\end{align}
		Using the Cauchy-Schwartz inequality and the Lipschitz continuity \eqref{eq:LIP} makes
		\begin{align}  \label{eq:NLT1}
		&\int_{t_n}^{t_{n+1}} \left(   \cN(u^{n}) - \cN(u^{n+1}(t)), \frac{du^{n+1}(t)}{dt}\right)_H dt \nonumber \\
		\le & C_L \int_{t_n}^{t_{n+1}} \left\| u^n -u^{n+1}(t) \right\|_{V^{\gamma}} \left\| \frac{du^{n+1}(t)}{dt} \right\|_{V^{\beta}} dt \nonumber \\
		\le & C_L \int_{t_n}^{t_{n+1}} \tau^{\frac12}\left\| \frac{du^{n+1}(t)}{dt} \right\|_{L^2(I_{n,n+1};V^{\gamma})} \left\| \frac{du^{n+1}(t)}{dt} \right\|_{V^{\beta}} dt \nonumber \\
		\le & C_L  \tau \left\| \frac{du^{n+1}(t)}{dt} \right\|_{L^2(I_{n,n+1};V^{\gamma})} \left\| \frac{du^{n+1}(t)}{dt} \right\|_{L^2(I_{n,n+1};V^{\beta})} \nonumber \\
		:= & \mathrm{NLT_0},
		\end{align}
		where the last inequality follows from the H\"{o}lder inequality.
		
		Similarly, the remaining terms in RHS of \eqref{eq:energy stability2} can be estimated as
		\begin{align} \label{eq:NLT2}
		&	\int_{t_n}^{t_{n+1}} \left(\left(1- \sum_{i=0}^{j-1}\ell_i(t-t_n) \right) \left( \cN(u^{n-j}) - \cN(u^{n-j+1}) \right), \frac{du^{n+1}(t)}{dt}\right)_H dt \nonumber \\
		\le & C_L  \tau^{\frac12} \left\| \frac{du^{n-j+1}(t)}{dt} \right\|_{L^2(I_{n-j,n-j+1};V^{\gamma})} \left\| \frac{du^{n+1}(t)}{dt} \right\|_{L^2(I_{n,n+1};V^{\beta})}  \left\| 1- \sum_{i=0}^{j-1}\ell_i(t-t_n) \right\|_{L^2(I_{n,n+1})} \nonumber \\
		:= &\mathrm{NLT_j}, \quad 1\le j \le k-1.
		\end{align}
		This completes the proof.
	\end{proof}

	Next we give an upper bound for the $L^2$-integral in time $ \left\| 1- \sum_{i=0}^{j-1}\ell_i(t-t_n) \right\|_{L^2(I_{n,n+1})}$ and further provide the energy stability for scheme \eqref{eq:etdmsns}.
	Recall that in \eqref{eq:Lagrange basis}, the Lagrange basis $\ell_i(s)$ is expressed as the polynomial of $s$ with coefficients $\xi_{i,j}$. According to the properties of $\xi_{i,j}$, it's easy to see
	\begin{align} \label{eq:ell L2 integral}
		\left\| 1- \sum_{i=0}^{j-1}\ell_i(t-t_n) \right\|_{L^2(I_{n,n+1})} = 	\left\| 1- \sum_{i=0}^{j-1}\sum_{r=0}^{k-1} \xi_{i,r} (t-t_n)^r \right\|_{L^2(I_{n,n+1})} = C^{*}_j\tau^{1/2}, \quad 1\le j\le k-1,
	\end{align}
	where the constants $C^{*}_j$ are independent of time step-size $\tau$ or the current time $t$. For convenience, we follow the convention of $C_0^*=1$ hereafter.
	
	We now introduce the following notation for the sake of brevity in presentation.
	\begin{align} \label{eq:cjbar}
		 \bm{\overline{C}}_j := \sum_{r=0}^{k-1-j}C_{k-1-r}^*, \quad j=0, \cdots, k-1 .
	\end{align}
	It follows that
	\begin{equation}\label{eq:cjbar-rel}
	\bm{\overline{C}}_j = \bm{\overline{C}}_{j+1} +C_j^*, \quad \bm{\overline{C}}_{k-1} =C^*_{k-1}.
	\end{equation}
	Next, we define the following modified energy
	\begin{align} \label{eq:modifed energy}
	\tilde{E}(u^n) =& E(u^n) + C_LC_3 \sum_{j=1}^{k-1} \bm{\overline{C}}_j\left\| \frac{du^{n-j+1}(t)}{dt}\right\|_{L^2(I_{n-j,n-j+1};H)}^{2} \nonumber \\
	& + C_LC_4 \sum_{j=1}^{k-1} \bm{\overline{C}}_j \tau^k \left\| \frac{du^{n-j+1}(t)}{dt}\right\|_{L^2(I_{n-j,n-j+1};V^{p(k)})}^{2},
	\end{align}
	where both $C_3, ~C_4$  depend on $\hat{C}$, $\tilde{C}$ as specified  in \eqref{eq:c1c2c3c4}.
	
Thanks to \eqref{eq:c1c2c3c4},  $C_1, C_3$ can be made as small as we need so long as we set $\hat{C}, \tilde{C}$ small enough. Therefore, for
small enough  constants $\hat{C}$, $\tilde{C}$ and large enough constant $A$ the following inequalities hold
	\begin{align}
	\left( 1 - C_L \left(C_3+C_1\bm{\overline{C}}_0 \right)\right) \ge C_LC_3\bm{\overline{C}}_1 , \label{eq:constant choose1} \\
	\left( A - C_L\left(C_4+C_2\bm{\overline{C}}_0 \right)\right) \ge C_LC_4\bm{\overline{C}}_1. \label{eq:constant choose2}
	\end{align}
	Note that $C_0^*=1$, and hence $\bm{\overline{C}}_0=\bm{\overline{C}}_1 + 1$ according to \eqref{eq:cjbar-rel}. Therefore \eqref{eq:constant choose1}--\eqref{eq:constant choose2} are simplified as
	\begin{align*} 
	1 & \ge C_{L} \left(C_3+C_1 \right)\bm{\overline{C}}_0  , \\
	A &\ge C_{L}\left(C_4+C_2 \right)\bm{\overline{C}}_0 .
	\end{align*}
	Pick $\hat{C}$ and $\tilde{C}$ so that $C_{L} \left(C_3+C_1 \right)\bm{\overline{C}}_0 \le 1$, i.e., $(1-\beta/p(k))\hat{C}^{1/(1-\beta/p(k))} + (1-\gamma/p(k))\tilde{C}^{1/(1-\gamma/p(k))} \le 2/(C_{L}\bm{\overline{C}}_0)$, and then let
	\begin{align*}
	A = &  C_{L}\left(\frac{\beta}{2p(k)}\hat{C}^{-p(k)/\beta}+\frac{\gamma}{2p(k)}\tilde{C}^{-p(k)/\gamma} \right)\bm{\overline{C}}_0,
	\end{align*}
	we have \eqref{eq:constant choose1}--\eqref{eq:constant choose2}. 
	
	We are now ready to prove the main result of the energy stability.
	\begin{theorem} \label{thm:energy stability}
			The numerical scheme \eqref{eq:etdmsns} is energy stable in the sense that
			\begin{align} \label{eq:energy stable}
			\tilde{E}(u^{n+1}) \le \tilde{E}(u^n), \quad \forall n\ge k,
			\end{align}
			provided that \eqref{eq:constant choose1}--\eqref{eq:constant choose2} are satisfied, and $p(k)=\frac{(\beta+\gamma)k}{2}$.
	\end{theorem}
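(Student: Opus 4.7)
\textbf{Proof plan for Theorem~\ref{thm:energy stability}.}  My starting point is the one-step estimate from Lemma~\ref{lem:energy estimate1}, together with the $L^{2}$-in-time bound \eqref{eq:ell L2 integral} for the Lagrange remainders, which produces on the right-hand side a finite sum of bilinear products
\[
C_L\,C_j^{*}\,\tau\,\Bigl\|\tfrac{du^{n-j+1}}{dt}\Bigr\|_{L^{2}(I_{n-j,n-j+1};V^{\gamma})}\Bigl\|\tfrac{du^{n+1}}{dt}\Bigr\|_{L^{2}(I_{n,n+1};V^{\beta})},\qquad j=0,\dots,k-1,
\]
with the convention $C_{0}^{*}=1$. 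The first task is to apply the interpolation estimate of Lemma~\ref{lem:interpolation} to every one of these products, with the role of $u$ played by $u^{n+1}$ (norm $V^{\beta}$, weight $q$) and the role of $v$ played by $u^{n-j+1}$ (norm $V^{\gamma}$, weight $1-q$). The crucial algebraic observation is that the $\tau$-exponents $2qp(k)/\beta$ and $2(1-q)p(k)/\gamma$ on the right-hand side of Lemma~\ref{lem:interpolation} should both equal $k$ so as to be absorbed by the stabilizer $A\tau^{k}$ on the left of \eqref{eq:energy estimate 1}. Solving the two equations for $q$ and eliminating it yields the unique choice $p(k)=(\beta+\gamma)k/2$ with $q=\beta k/(2p(k))$, which forces the prescribed exponent in the theorem.

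Next I will collect terms. Summing over $j=0,\dots,k-1$, the "current-step" contributions (those containing $u^{n+1}$) come with the aggregate factor $\sum_{j=0}^{k-1}C_{j}^{*}=\bm{\overline{C}}_{0}$, producing exactly $C_{L}\bm{\overline{C}}_{0}(C_{1}\|\cdot\|_{H}^{2}+C_{2}\tau^{k}\|\cdot\|_{V^{p(k)}}^{2})$ evaluated on $I_{n,n+1}$. Moving these onto the left-hand side against the natural dissipation $\|\cdot\|_{L^{2}(I_{n,n+1};H)}^{2}+A\tau^{k}\|\cdot\|_{L^{2}(I_{n,n+1};V^{p(k)})}^{2}$, and peeling off the $j=0$ "past-step" contribution (which also sits at step $n+1$), the coefficient conditions \eqref{eq:constant choose1}--\eqref{eq:constant choose2} are exactly what I need so that the residual left-hand coefficients are at least $C_{L}C_{3}\bm{\overline{C}}_{1}$ and $C_{L}C_{4}\bm{\overline{C}}_{1}$, respectively. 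What remains on the right is
\[
C_{L}C_{3}\sum_{j=1}^{k-1}C_{j}^{*}\Bigl\|\tfrac{du^{n-j+1}}{dt}\Bigr\|_{L^{2}(I_{n-j,n-j+1};H)}^{2}+C_{L}C_{4}\tau^{k}\sum_{j=1}^{k-1}C_{j}^{*}\Bigl\|\tfrac{du^{n-j+1}}{dt}\Bigr\|_{L^{2}(I_{n-j,n-j+1};V^{p(k)})}^{2}.
\]

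The final step is the telescoping identification with the history part of $\tilde{E}$. Writing $F^{n}=\sum_{j=1}^{k-1}\bm{\overline{C}}_{j}\|du^{n-j+1}/dt\|_{L^{2}(I_{n-j,n-j+1};H)}^{2}$ and shifting the index, an elementary reindexing together with the recurrence $\bm{\overline{C}}_{j}=\bm{\overline{C}}_{j+1}+C_{j}^{*}$ and the boundary relation $\bm{\overline{C}}_{k-1}=C_{k-1}^{*}$ from \eqref{eq:cjbar-rel} yields
\[
F^{n+1}-F^{n}=\bm{\overline{C}}_{1}\Bigl\|\tfrac{du^{n+1}}{dt}\Bigr\|_{L^{2}(I_{n,n+1};H)}^{2}-\sum_{j=1}^{k-1}C_{j}^{*}\Bigl\|\tfrac{du^{n-j+1}}{dt}\Bigr\|_{L^{2}(I_{n-j,n-j+1};H)}^{2},
\]
and analogously for the $V^{p(k)}$-weighted counterpart. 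Plugging these identities into the inequality obtained in the previous step, the remaining past-step norms on the right are exactly cancelled by $-C_{L}C_{3}F^{n}$ and its $V^{p(k)}$-analogue, while the $\bm{\overline{C}}_{1}\|du^{n+1}/dt\|^{2}$ terms are furnished by the absorbed left-hand side. The outcome is the clean inequality $\tilde{E}(u^{n+1})-\tilde{E}(u^{n})\le 0$.

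I expect the main obstacle to be the combinatorial bookkeeping in step three: lining up the aggregated Lagrange weights $\bm{\overline{C}}_{0},\bm{\overline{C}}_{1},C_{j}^{*}$ with the correct time intervals, verifying that the $j=0$ term is exactly the one that must be absorbed to unlock \eqref{eq:constant choose1}--\eqref{eq:constant choose2}, and checking the telescoping cancellation in a non-symmetric way (the current-step $H$-norm appears with coefficient $\bm{\overline{C}}_{1}$ on both sides, whereas past-step norms appear with $C_{j}^{*}$ on the right but $\bm{\overline{C}}_{j}$ in $F^{n}$). Once this bookkeeping is correctly set up, the argument reduces to applying Lemmas~\ref{lem:interpolation} and~\ref{lem:energy estimate1}, together with the algebraic identities listed in \eqref{eq:cjbar}--\eqref{eq:cjbar-rel} and the choice of $p(k)$.
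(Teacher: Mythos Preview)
Your plan is correct and follows essentially the same route as the paper's proof: apply Lemma~\ref{lem:interpolation} to each bilinear term coming from Lemma~\ref{lem:energy estimate1} and \eqref{eq:ell L2 integral}, fix $p(k)=(\beta+\gamma)k/2$ so that both $\tau$-exponents equal $k$, absorb the current-step contributions using \eqref{eq:constant choose1}--\eqref{eq:constant choose2}, and then telescope the history terms via the recurrence \eqref{eq:cjbar-rel}. The only cosmetic difference is that the paper carries out the telescoping by adding $C_LC_3\sum_{j=1}^{k-2}\bm{\overline{C}}_{j+1}\|\cdot\|^{2}$ (and its $V^{p(k)}$ counterpart) to both sides and then identifying $\tilde{E}(u^{n+1})$ and $\tilde{E}(u^{n})$, whereas you compute the difference $F^{n+1}-F^{n}$ explicitly; these are the same algebraic manipulation and your identity for $F^{n+1}-F^{n}$ is correct.
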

	\begin{proof}
	By \eqref{eq:ell L2 integral}, the estimation \eqref{eq:NLT2} for $\mathrm{NLT_{j}}, ~1\le j\le k-1$ can be simplified to
	\begin{align} \label{eq:NLT3}
	\mathrm{NLT_{j}} =  C_L C_j^* \tau \left\| \frac{du^{n-j+1}(t)}{dt} \right\|_{L^2(I_{n-j,n-j+1};V^{\gamma})} \left\| \frac{du^{n+1}(t)}{dt} \right\|_{L^2(I_{n,n+1};V^{\beta})} .
	\end{align}
	Applying Lemma \ref{lem:interpolation} to \eqref{eq:NLT1} and \eqref{eq:NLT3}, these nonlinear terms can be bounded further:
	\begin{align}
	\mathrm{NLT_0} \le & C_L \left[ C_1 \left\| \frac{du^{n+1}(t)}{dt}\right\|_{L^2(I_{n,n+1};H)}^{2}  + C_2 \tau^{\frac{2qp(k)}{\beta}} \left\|\frac{du^{n+1}(t)}{dt}\right\|_{L^2(I_{n,n+1};V^{p(k)})}^{2} \right. \nonumber \\
	& \left. + C_3 \left\| \frac{du^{n+1}(t)}{dt}\right\|_{L^2(I_{n,n+1};H)}^{2} + C_4 \tau^{\frac{2(1-q)p(k)}{\gamma}} \left\| \frac{du^{n+1}(t)}{dt}\right\|_{L^2(I_{n,n+1};V^{p(k)})}^{2} \right],  \label{eq:interp1}\\
	\mathrm{NLT_{j}} \le & C_L C_j^*\left[ C_1\left\| \frac{du^{n+1}(t)}{dt}\right\|_{L^2(I_{n,n+1};H)}^{2} + C_2 \tau^{\frac{2qp(k)}{\beta}} \left\|\frac{du^{n+1}(t)}{dt}\right\|_{L^2(I_{n,n+1};V^{p(k)})}^{2} \right. \nonumber \\
	& \left. + C_3\left\| \frac{du^{n-j+1}(t)}{dt}\right\|_{L^2(I_{n-j,n-j+1};H)}^{2} + C_4\tau^{\frac{2(1-q)p(k)}{\gamma}} \left\| \frac{du^{n-j+1}(t)}{dt}\right\|_{L^2(I_{n-j,n-j+1};V^{p(k)})}^{2} \right]. \label{eq:interp2}
	\end{align}	
	Choosing indexes $q$ and $p(k)$  to satisfy
	\begin{align} \label{eq:p(k)}
	\frac{2qp(k)}{\beta} = k, \quad \frac{2(1-q)p(k)}{\gamma} = k,
	\end{align}
	and simple calculation shows
	\begin{align} \label{eq:p(k)2}
			q = \frac{1}{1+\gamma/\beta}, \quad	p(k) = \frac{(\beta+\gamma)k}{2}.
	\end{align}
	Then estimates \eqref{eq:interp1}--\eqref{eq:interp2} give
		\begin{align}
	\mathrm{NLT_0} \le & C_L \left[ C_1 \left\| \frac{du^{n+1}(t)}{dt}\right\|_{L^2(I_{n,n+1};H)}^{2}  + C_2 \tau^{k} \left\|\frac{du^{n+1}(t)}{dt}\right\|_{L^2(I_{n,n+1};V^{p(k)})}^{2} \right. \nonumber \\
	& \left. + C_3 \left\| \frac{du^{n+1}(t)}{dt}\right\|_{L^2(I_{n,n+1};H)}^{2} + C_4 \tau^{k} \left\| \frac{du^{n+1}(t)}{dt}\right\|_{L^2(I_{n,n+1};V^{p(k)})}^{2} \right],  \label{eq:interp3}\\
	\mathrm{NLT_{j}} \le & C_L C_j^*\left[ C_1\left\| \frac{du^{n+1}(t)}{dt}\right\|_{L^2(I_{n,n+1};H)}^{2}  + C_2 \tau^{k} \left\|\frac{du^{n+1}(t)}{dt}\right\|_{L^2(I_{n,n+1};V^{p(k)})}^{2} \right. \nonumber \\
	& \left. + C_3\left\| \frac{du^{n-j+1}(t)}{dt}\right\|_{L^2(I_{n-j,n-j+1};H)}^{2} + C_4\tau^{k} \left\| \frac{du^{n-j+1}(t)}{dt}\right\|_{L^2(I_{n-j,n-j+1};V^{p(k)})}^{2} \right]. \label{eq:interp4}
	\end{align}	
 	Unify the expression in \eqref{eq:interp1}--\eqref{eq:interp2} with the convention of $C_0^*=1$ and combine \eqref{eq:interp3}--\eqref{eq:interp4}  with  \eqref{eq:energy stability2}, it yields
	\begin{align} \label{eq:energy stability3}
			&	\left\| 	\frac{du^{n+1}(t)}{dt} \right\|_{L^2(I_{n,n+1};H)}^2 + A\tau^k \left\| 	\frac{du^{n+1}(t)}{dt}\right\|_{L^2(I_{n,n+1};V^{p(k)})}^2 + E(u^{n+1}) - E(u^{n}) \nonumber \\
			\le & C_L \left(C_3+C_1\sum_{j=0}^{k-1} C_j^* \right) \left\| \frac{du^{n+1}(t)}{dt}\right\|_{L^2(I_{n,n+1};H)}^{2}  + C_L\left(C_4+C_2\sum_{j=0}^{k-1} C_j^* \right)\tau^k \left\|\frac{du^{n+1}(t)}{dt}\right\|_{L^2(I_{n,n+1};V^{p(k)})}^{2} \nonumber \\
			& + C_LC_3\sum_{j=1}^{k-1}  C_j^* \left\| \frac{du^{n-j+1}(t)}{dt}\right\|_{L^2(I_{n-j,n-j+1};H)}^{2}  + C_LC_4 \tau^k\sum_{j=1}^{k-1}  C_j^*  \left\| \frac{du^{n-j+1}(t)}{dt}\right\|_{L^2(I_{n-j,n-j+1};V^{p(k)})}^{2}.
	\end{align}
	Adding  $C_LC_3  \sum_{j=1}^{k-2} \bm{\overline{C}}_{j+1}\left\| \frac{du^{n-j+1}(t)}{dt}\right\|_{L^2(I_{n-j,n-j+1};H)}^{2}$, $C_LC_4 \sum_{j=1}^{k-2} \bm{\overline{C}}_{j+1} \tau^k \left\| \frac{du^{n-j+1}(t)}{dt}\right\|_{L^2(I_{n-j,n-j+1};V^{p(k)})}^{2}$ to both sides of \eqref{eq:energy stability3} and utilizing \eqref{eq:cjbar-rel}, we deduce
	\begin{align} \label{eq:energy stability5}
	&E(u^{n+1})+	\left( 1 - C_L \left(C_3 +C_1\sum_{j=0}^{k-1} C_j^*  \right) \right)\left\| 	\frac{du^{n+1}(t)}{dt} \right\|_{L^2(I_{n,n+1};H)}^2 \nonumber \\
	& + \left( A - C_L\left(C_4 +C_2\sum_{j=0}^{k-1} C_j^* \right) \right)\tau^k \left\| 	\frac{du^{n+1}(t)}{dt}\right\|_{L^2(I_{n,n+1};V^{p(k)})}^2  \nonumber  \\
	& + C_LC_3  \sum_{j=1}^{k-2} \bm{\overline{C}}_{j+1}\left\| \frac{du^{n-j+1}(t)}{dt}\right\|_{L^2(I_{n-j,n-j+1};H)}^{2} +  C_LC_4 \sum_{j=1}^{k-2} \bm{\overline{C}}_{j+1} \tau^k \left\| \frac{du^{n-j+1}(t)}{dt}\right\|_{L^2(I_{n-j,n-j+1};V^{p(k)})}^{2}  \nonumber \\
	\le &  E(u^{n}) +  C_LC_3\bm{\overline{C}}_{1} \left\| \frac{du^{n}(t)}{dt}\right\|_{L^2(I_{n-1,n};H)}^{2}  + C_LC_4 \bm{\overline{C}}_{1}  \tau^k \left\| \frac{du^{n}(t)}{dt}\right\|_{L^2(I_{n-1,n};V^{p(k)})}^{2}  \nonumber \\
	& + C_LC_3 \sum_{j=2}^{k-1} \bm{\overline{C}}_j \left\| \frac{du^{n-j+1}(t)}{dt}\right\|_{L^2(I_{n-j,n-j+1};H)}^{2} + C_L C_4 \sum_{j=2}^{k-1} \bm{\overline{C}}_j \tau^k \left\| \frac{du^{n-j+1}(t)}{dt}\right\|_{L^2(I_{n-j,n-j+1};V^{p(k)})}^{2} .
	\end{align}
	Then the modified energy decaying property \eqref{eq:energy stable}
	follows from \eqref{eq:constant choose1}--\eqref{eq:constant choose2} and \eqref{eq:energy stability5}.
	
\end{proof}

\begin{Remark}
 The requirement postulated in \eqref{eq:constant choose1}--\eqref{eq:constant choose2} is sufficient but not necessary. It would be interesting to find the optimal choice of $A$ that guarantees energy stability.
Our numerical results presented in the next section suggest that the stability is insensitive to the choice of $A$ for a certain range well below the theoretical requirement that we have derived here. The theoretically optimal $p(k)$ comes from two consideration: (1) it should be as small as possible to avoid large artificial error; (2) it should be large enough to control the nonlinear term when combined with the original dissipation term. The critical  $p(k)$ is derived  under the condition that we wish to take $A$ to be independent of $\epsilon$. Larger $p(k)$ may lead to  larger error, especially for high-frequency solutions. We can use a relatively low-order regularization to reduce the artificial errors, while this treatment may give us an $A$ that depends on $\epsilon$ or $\tau$. In \cite{cheng2019third},   a $A\tau^{2}\left(\Delta^2 u^{n+1} - \Delta^2 u^n\right)$ regularization term is added in their third-order ETD scheme, the exponent $p(3)=1$ is smaller than our theoretical optimal value, thus the artificial error is supposed to be smaller, while their stabilized coefficient is of order $\mathcal{O}(\epsilon^{-2})$. This is due to the fact  that the regularization combined with original dissipation term is insufficient to control the explicit nonlinear term since temporal accuracy of the artificial regularization term has to be  kept, thus the  surface diffusion term is involved in the energy stability analysis which is of order $\mathcal{O}(\epsilon^2)$.
\end{Remark}

\begin{Remark}
	Note that the regularization term $A\tau^k\frac{\partial }{\partial t}\cL^{p(k)} u$ can be replaced by a Dupont-Douglas type $A\tau^{k-1}\cL^{p(k)}\left( u^{n+1} - u^n\right)$ as long as the exact solution is smooth enough. The continuous form adopted here is  consistent with the spirit of $ETD$.  As we mentioned before, the introduction of high-order regularization may lead to large truncation error for the high-frequency solution, while the convergence in finite time is preserved as we can see in the following numerical tests. For long-time coarsening process with random initial data, two alternative treatment could be considered in the initial evolution until a relatively smooth solution is obtained: 1. use some high-order methods without stabilization; 2. use variable time step-size in the simulation, which is supposed to be very small in the beginning in order to handle the effect of high-frequency components of solution.
	
\end{Remark}

	\section{Numerical example}
	In this section, we applied the abstract framework to the NSS equation with $k=4$.  The two dimensional domain $\Omega=[0,L]^2$ with periodic boundary condition is considered. In this case, $\cL = \Delta^2$, $\cN(u) = -\nabla\cdot \left( \frac{\nabla u}{1+|\nabla u|^2}\right)$ in our general framework. 
	Thus the abstract functional spaces are specified to $H=\{f: f\in L^2 ~\text{with zero mean}\}$, $V^{1/2}=\{f: f\in H_{per}^1~\text{with zero mean}\}$, $V^\alpha=\{f: f\in H_{per}^{2\alpha}~\text{with zero mean}\}$
	and the Lipschitz continuity takes the following form
	\begin{align} \label{eq:LIP nss}
	\left\| \cN(u) - \cN(v) \right\|_{V^{-\frac12}} \le  \left\| u-v\right\|_{V^\frac12},
	\end{align}
	that is $\beta=\gamma=\frac12$, then equation \eqref{eq:p(k)2} indicates $p(k)=k/2$.
	
	Here we use a fourth-order ETD-MS method to approximate equation \eqref{eq:nss}, i.e.,
	\begin{align} \label{eq:etdms nss}
	\frac{du^{n+1}(t)}{dt} + \epsilon \Delta^2u^{n+1}(t) + A\tau^4 \frac{d}{dt} \Delta^4 u^{n+1}(t) = \sum_{i=0}^{3} \ell_i(t-t_n) \cN(u^{n-i}), \quad  t \in [t_n, t_{n+1}].
	\end{align}
	The Lagrange basis functions $\{\ell_i(s)\}_{i=0}^{3}, ~0\le s \le \tau$ are
	\begin{align*}
		\ell_0(s) &= \frac{s^3}{6\tau^3} + \frac{s^2}{\tau^2} + \frac{11s}{6\tau} + 1, \quad 	\ell_1(s) = -\frac{s^3}{2\tau^3} - \frac{5s^2}{2\tau^2} - \frac{3s}{\tau}, \\
		\ell_2(s) & = \frac{s^3}{2\tau^3} + \frac{2s^2}{\tau^2} + \frac{3s}{2\tau}, \quad 	\ell_3(s) = -\frac{s^3}{6\tau^3} - \frac{s^2}{2\tau^2} - \frac{s}{3\tau}.
	\end{align*}
	The corresponding constants in \eqref{eq:ell L2 integral} are $C_1^*=\sqrt{9143/3780}, \ C_2^*=\sqrt{157441/7560}, \ C_3^*=\sqrt{212/945}$ and 
	\begin{align} \label{eq: barC}
		\bm{\overline{C}}_1 = & C_1^*+ C_2^* +C_3^*  \nonumber \\
		= & \left(\sqrt{18286}+\sqrt{157441}+\sqrt{1696}\right) / \sqrt{7560}.
	\end{align}
	The constants in \eqref{eq:bad term} are $C_1=\frac38\hat{C}^{4/3}, \ C_2=\frac18\hat{C}^{-4}, \ C_3=\frac38\tilde{C}^{4/3}, \ C_4=\frac18\tilde{C}^{-4}$. Substitute these into \eqref{eq:constant choose1}--\eqref{eq:constant choose2}, then we can take $\hat{C}=\tilde{C}=\left(\frac{4}{3(1+\bm{\overline{C}}_1)}\right)^{3/4}$ to satisfy
	\begin{align*}
		1 - \frac{3\hat{C}^{4/3}}{8}\left(2+\bm{\overline{C}}_1\right) \ge \frac{3\hat{C}^{4/3}}{8}\bm{\overline{C}}_1.
	\end{align*}
	Thus the regularization coefficient $A= \frac{\hat{C}^{-4}}{8}\left(1+\bm{\overline{C}}_1\right)=\frac{27\left(1+\bm{\overline{C}}_1\right)^4}{512} $ with $\bm{\overline{C}}_1$ given in \eqref{eq: barC} is required. However, we will show the numerical results do not depend much on this relatively stringent condition in the following.
	
	 Note that initial steps are needed to start the high-order multi-step scheme \eqref{eq:etdms nss} and here we use the ETD-RK method to compute the first three numerical solutions. The general expression of the fourth-order ETD-RK method is illustrated in  \cite{kassam2005fourth-order} as
	\begin{equation} \label{eq:ETDRK4}
	\left\{
	\begin{aligned}
	a^n =& e^{-\epsilon\cL \tau/2}u^n + (-\epsilon\cL)^{-1}\left(e^{-\epsilon\cL \tau/2}-I\right)\cN(u^n,t^n),  \\
	b^n = & e^{-\epsilon\cL \tau/2}u^n + (-\epsilon\cL)^{-1}\left(e^{-\epsilon\cL \tau/2}-I\right)\cN(u^n,t^n+\tau/2),  \\
	c^n = & e^{-\epsilon\cL \tau/2}a^n + (-\epsilon\cL)^{-1}\left(e^{-\epsilon\cL \tau/2}-I\right) \left(2\cN(b^n,t^n+\tau/2)- \cN(u^n,t^n) \right),  \\
	u^{n+1}=& e^{-\epsilon\cL\tau} u^{n}+\tau^{-2} (-\epsilon\cL)^{-3}\left\{\left[-4+\epsilon\cL \tau +e^{-\epsilon\cL\tau}\left(4+3\epsilon \cL\tau+(-\epsilon\cL\tau)^{2}\right)\right] \cN\left(u^{n}, t^{n}\right)\right.  \\
	&+2\left[2-\epsilon\cL \tau-e^{-\epsilon\cL \tau}(2+\epsilon\cL\tau)\right]\left(\cN\left(a^{n}, t^{n}+\tau / 2\right)+\cN\left(b_{n}, t_{n}+\tau / 2\right)\right)   \\
	&\left.+\left[-4+3\epsilon \cL\tau-(-\epsilon\cL\tau)^{2}+e^{-\epsilon\cL\tau}(4+\epsilon\cL\tau)\right] \cN\left(c^{n}, t^{n}+\tau\right)\right\} .
	\end{aligned}
	\right.
	\end{equation}
	To solve  system \eqref{eq:etdms nss}, the spatial discretization is performed by the Fourier pseudo-spectral method with a resolution $N=128$ and this can be efficiently implemented via the fast Fourier transform. The convergence  and energy stability tests are provided.
	
	\subsection{Temporal convergence test}
	In this subsection, the fourth order temporal convergence of scheme \eqref{eq:etdms nss} is verified. The parameters are chosen as $L=2\pi$, $\epsilon=0.01$ and terminal time $T=1$. To test the convergence, an artificial forcing term $g$ is added in the right hand side of \eqref{eq:etdms nss} to make the exact solution $u(t)=e^{-t}\cos(2x)\cos(2y)$:
	\begin{align*}
	g &= (-1+64\varepsilon^2)u-\frac{8u}{1+2e^{-2t}[1-\cos(4 x)\cos(4 y)]} \\
	&\quad +\frac{16e^{-2t}u}{[1+2e^{-2t}(1-\cos(4x)\cos(4y))]^2} [\cos(4x) + \cos(4y) - 2\cos(4x)\cos(4y)],
	\end{align*}
	and the discrete $L^2$ error is calculated.
	Firstly, the effect of regularized coefficient $A$ on  the convergence is examined in Table~\ref{tab: temporal convergence}. From which we can see, the errors grow linearly with the increasing of $A$ and the convergence order is preserved for all of the choice of $A$. Then let $A$ be fixed, results for ${p(k)}=1.5$, ${p(k)}=1.9$, ${p(k)}=2.1$ and ${p(k)}=2.5$ are presented in Table~\ref{tab: temporal convergence2}. While higher order regularization leads to larger error, clear fourth order convergence rates have been observed for all of ${p(k)}$.
		\begin{table}[H]
		\centering
		\caption{Temporal convergence of \eqref{eq:etdms nss} with  ${p(k)}=2$.}\label{tab: temporal convergence}
		\begin{tabular}{|c|c|c|c|c|c|c|c|c|}
			\hline
			\multirow{2}{*}{$\tau$} & \multicolumn{2}{c|}{$A=1$}  & \multicolumn{2}{c|}{$A=5$ } & \multicolumn{2}{c|}{$A=10$ } & \multicolumn{2}{c|}{$A=\frac{27\left(1+\bm{\overline{C}}_1\right)^4}{512}$ }  \\
			\cline{2-9}
			& error & order  & error & order  & error & order & error & order \\
			\hline
			2.50E-03 & 6.53e-07 &   & 3.26e-06 &  & 6.52e-06 &  & 1.12e-04 & \\
			\hline
			1.25E-03 & 4.10e-08 & 3.993  & 2.05e-07 & 3.992 & 4.10e-07 &  3.991 & 7.18e-06 & 3.959\\
			\hline
			6.25E-04 & 2.57e-09 & 3.997  & 1.28e-08 & 3.996 & 2.57e-08 & 3.996 & 4.50e-07 & 3.994  \\
			\hline
			3.13E-04 & 1.59e-10 & 4.009  & 8.04e-10 & 3.999 & 1.61e-09 & 4.000 & 2.82e-08 & 3.998 \\
			\hline
			1.56E-04 & 8.94e-12 & 4.157  & 4.96e-11 & 4.017 & 9.99e-11 & 4.007 & 1.76e-09 & 4.000  \\
			\hline
		\end{tabular}
	\end{table}

	\begin{table}[H]
	\centering
	\caption{Temporal convergence of \eqref{eq:etdms nss} with $ A=\frac{27\left(1+\bm{\overline{C}}_1\right)^4}{512}$.}\label{tab: temporal convergence2}
	\begin{tabular}{|c|c|c|c|c|c|c|c|c|}
		\hline
		\multirow{2}{*}{$\tau$} & \multicolumn{2}{c|}{${p(k)}=1.5$} & \multicolumn{2}{c|}{${p(k)}=1.9$} & \multicolumn{2}{c|}{${p(k)}=2.1$ } & \multicolumn{2}{c|}{${p(k)}=2.5$}  \\
		\cline{2-9}
		& error & order & error & order & error & order & error & order  \\
		\hline
		2.50E-03  & 1.43e-05  & & 6.03e-05 &   & 1.34e-04 & &  7.14e-04 &  \\
		\hline
		1.25E-03 & 8.99e-07 &3.992 & 3.83e-06 & 3.976  & 8.79e-06 & 3.934 & 5.29e-05  & 3.755 \\
		\hline
		6.25E-04 & 5.63e-08 & 3.996 & 2.40e-07 & 3.995  & 5.52e-07 & 3.993  & 3.56e-06 & 3.891  \\
		\hline
		3.13E-04 & 3.52e-09 & 3.998& 1.50e-08 & 3.998  & 3.46e-08 & 3.998  & 2.25e-07 & 3.983\\
		\hline
		1.56E-04 & 2.19e-10 & 4.006 & 9.41e-10 & 3.998   & 2.16e-09 & 4.000  & 1.41e-08  & 3.998 \\
		\hline
	\end{tabular}
\end{table}

 	\subsection{Simulation of coarsening process}
 	In this subsection, the physically interesting coarsening process is simulated.  	The parameters are now set as $L=12.8$, $\epsilon=0.005$,  $T=30000$ and $\tau=10^{-3}$. Two choices for the regularization coefficient  $A=\frac{27\left(1+\bm{\overline{C}}_1\right)^4}{512}$ and $A=10$ are tested. The scaling laws for the energy $E$, average surface roughness $h$ and the average slope $m$ will be shown, that is $E \sim O(-\ln (t))$, $h \sim O(t^{\frac{1}{2}})$ and  $m\sim O(t^{\frac{1}{4}})$ as $t\rightarrow\infty$. (See \cite{golubovic1997interfacial, li2003thin, li2004epitaxial} and references therein). The corresponding definitions for these physical quantities are
 	\begin{align*}
 	E(u) & =\left(-\frac{1}{2}\ln(1+|\nabla u|^2),1\right) + \frac{\varepsilon^2}{2}\|\Delta  u\|^2, \\
 	h(u,t) &= \sqrt{\frac{h^2}{|\Omega|}\sum_{x} |u(x,t)-\bar{u}(t)|^2}, \quad \mbox{with} \quad \bar{u}(t):=\frac{h^2}{|\Omega|}\sum_{x} u(x,t), \\
 	m(u,t) &= \sqrt{\frac{h^2}{|\Omega|}\sum_{x} |\nabla u(\textbf{x}_{i,j},t)|^2}.
 	\end{align*}

 	The snapshots of the numerical solution \eqref{eq:etdms nss} at time $t = $ 1, {5000, 10000, 15000, 20000, 30000} with $A=\frac{27\left(1+\bm{\overline{C}}_1\right)^4}{512}$ and $A=10$ are displayed in Figures~\ref{fig: ss} and \ref{fig: ss2} respectively. The evolution of $E$, $h$ and $m$ with  two choices of $A$ is established in Figures~\ref{fig: energy}--\ref{fig: slope}, and the linear fitting results for the solution of \eqref{eq:etdms nss} in time interval $[1, 400]$ are also presented, which is consistent with the theoretical scaling laws.
 	
 	Obvious differences of numerical solutions are observed  for two choices of $A$ with a uniform step-size $\tau=10^{-3}$, in Figures \ref{fig: ss}--\ref{fig: slope}. Since random initial data is used, the artificial error arising from high-order regularization term may be large in the initial stage, resulting in the sensitivity to stabilized coefficient $A$. Therefore, an additional coarsening simulation is performed in the following with a variable time step-size, which is set as: $\tau=10^{-6}$ for $t\le 1$, $\tau=10^{-5}$ for $1<t\le 10$, $\tau=10^{-4} $ for $10<t\le 100$ and $\tau=10^{-3}$ for $t>100$ and other parameters are kept. The related results are displayed in Figures~\ref{fig: ss_1}--\ref{fig: ss_2} and Figures~\ref{fig: energy1}--\ref{fig: slope1}. From which we can see, the sensitivity to $A$ is significantly reduced. Also, minor differences of solutions between two choices of $A$ are observed in Figure~\ref{fig: re}.

 	\begin{Remark}
 		Thanks to the anonymous reviewer for pointing out the  differences between snapshots of numerical solution  for two choices of $A$ at the same time level, in Figures \ref{fig: ss}--\ref{fig: ss2}. In fact, the introduction of high-order regularization term may result in large truncation error in the beginning since random initial data is chosen, and further lead to different steady phase states. The same  simulation of coarsening process is performed with variable time step-sizes and other parameters kept. The time step-size is taken to be small to handle the effect of high-order regularization during the initial evolution of solution and a relatively large step-size is adopted after a smooth solution obtained. With this treatment being taken, the insensitivity to the stabilized coefficient $A$ of our numerical scheme can be observed in Figures~\ref{fig: ss_1}--\ref{fig: slope1}. This exactly shows the necessity of variable step-size in the long-time simulation of coarsening process. The choice of variable step-size is empirical here, and similar idea has been applied in \cite{li2018second,wang2020optimal}. There are also various adaptive methods regarding to the choice of time step-size, see \cite{chen2019second,feng2015,qiao2011,zhang2012}. In fact, one of our authors, Xiaoming Wang, has already pointed out the feasibility and necessity of using adaptive strategies or hybrid approach (utilize some alternative high-order methods without regularization for the initial stage) for long-time computation in our previous paper \cite{chen2020energy}, and it would be our future work to conduct the adaptive time-stepping strategy with a posterior estimate.
 	\end{Remark}

 		\begin{figure}[ht]
 		\centering
 		\noindent\makebox[\textwidth][c] {
 			\begin{minipage}{0.3\textwidth}
 				\includegraphics[width=\textwidth]{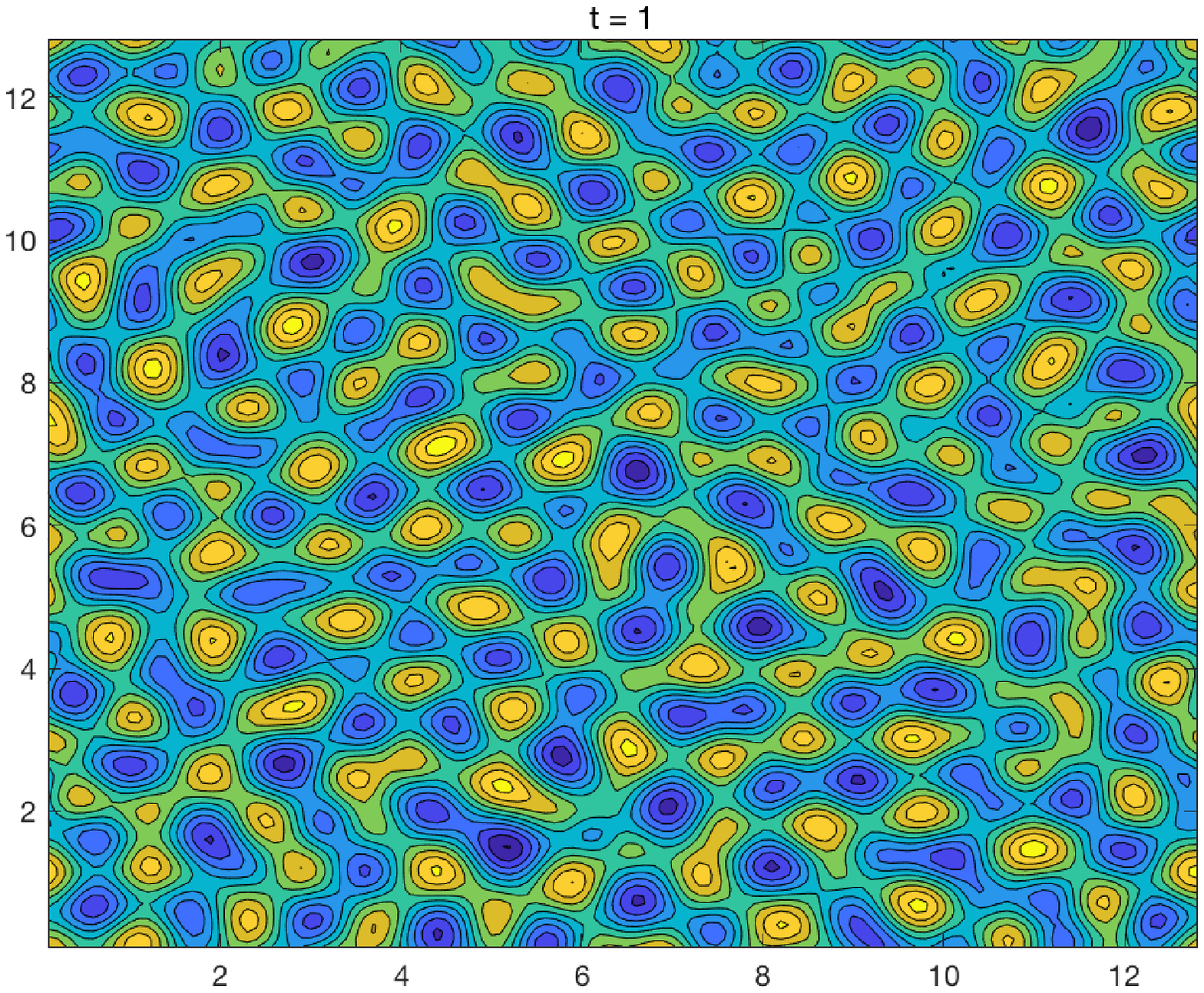}
 			\end{minipage}
 			\begin{minipage}{0.3\textwidth}
 				\includegraphics[width=\textwidth]{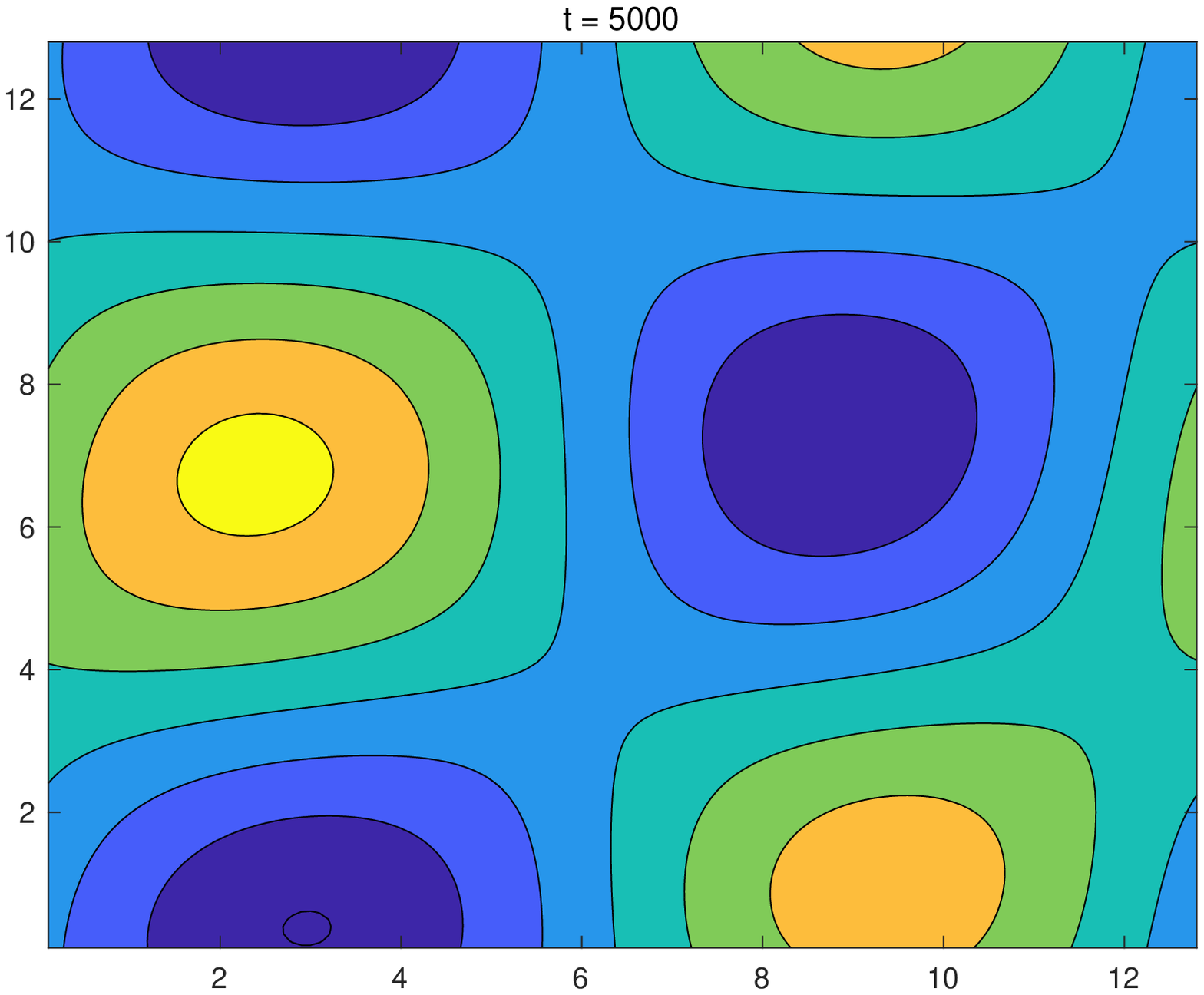}
 			\end{minipage}
 			\begin{minipage}{0.3\textwidth}
 				\includegraphics[width=\textwidth]{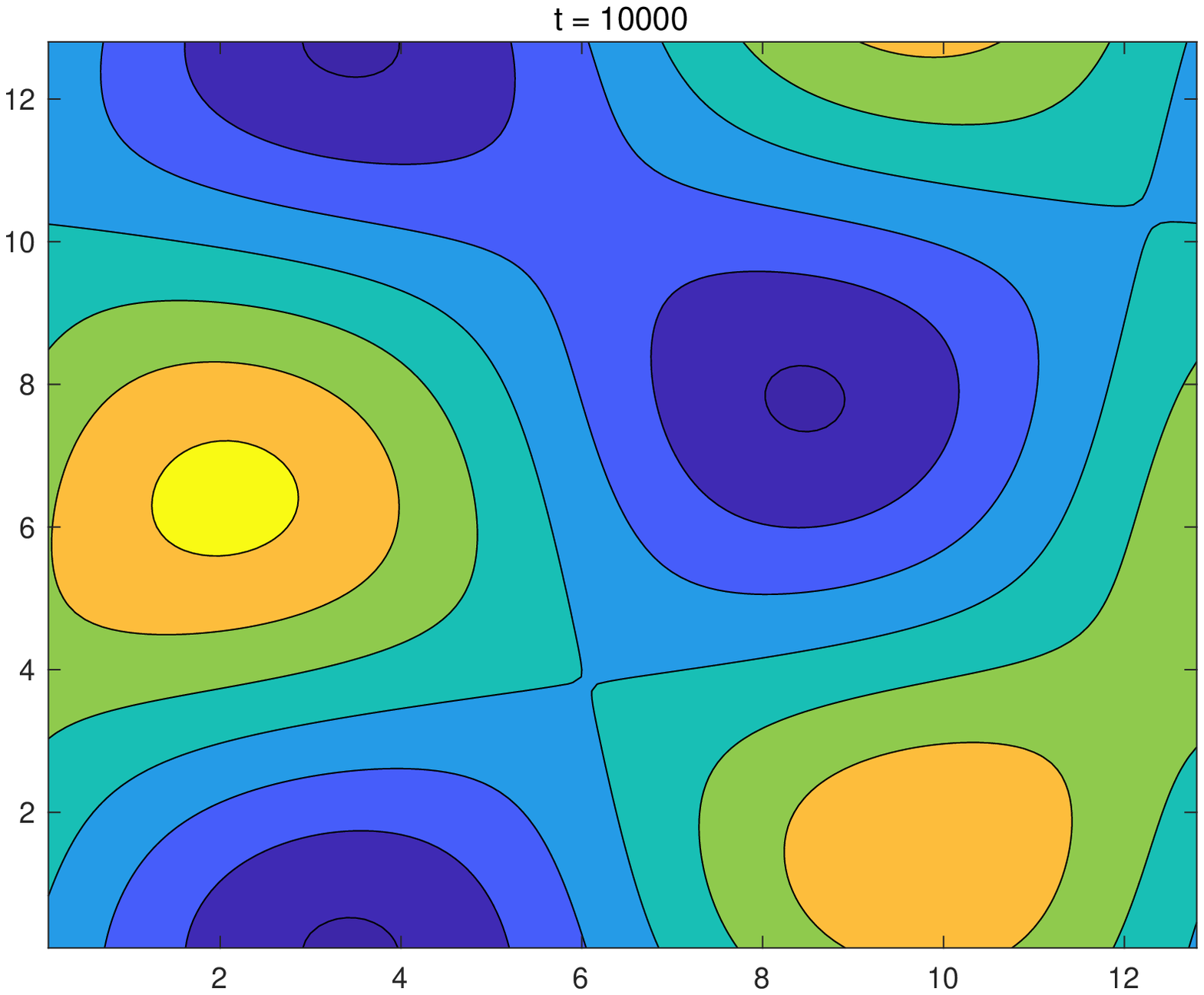}
 			\end{minipage}
 		}
 		\noindent\makebox[\textwidth][c] {
 			\begin{minipage}{0.3\textwidth}
 				\includegraphics[width=\textwidth]{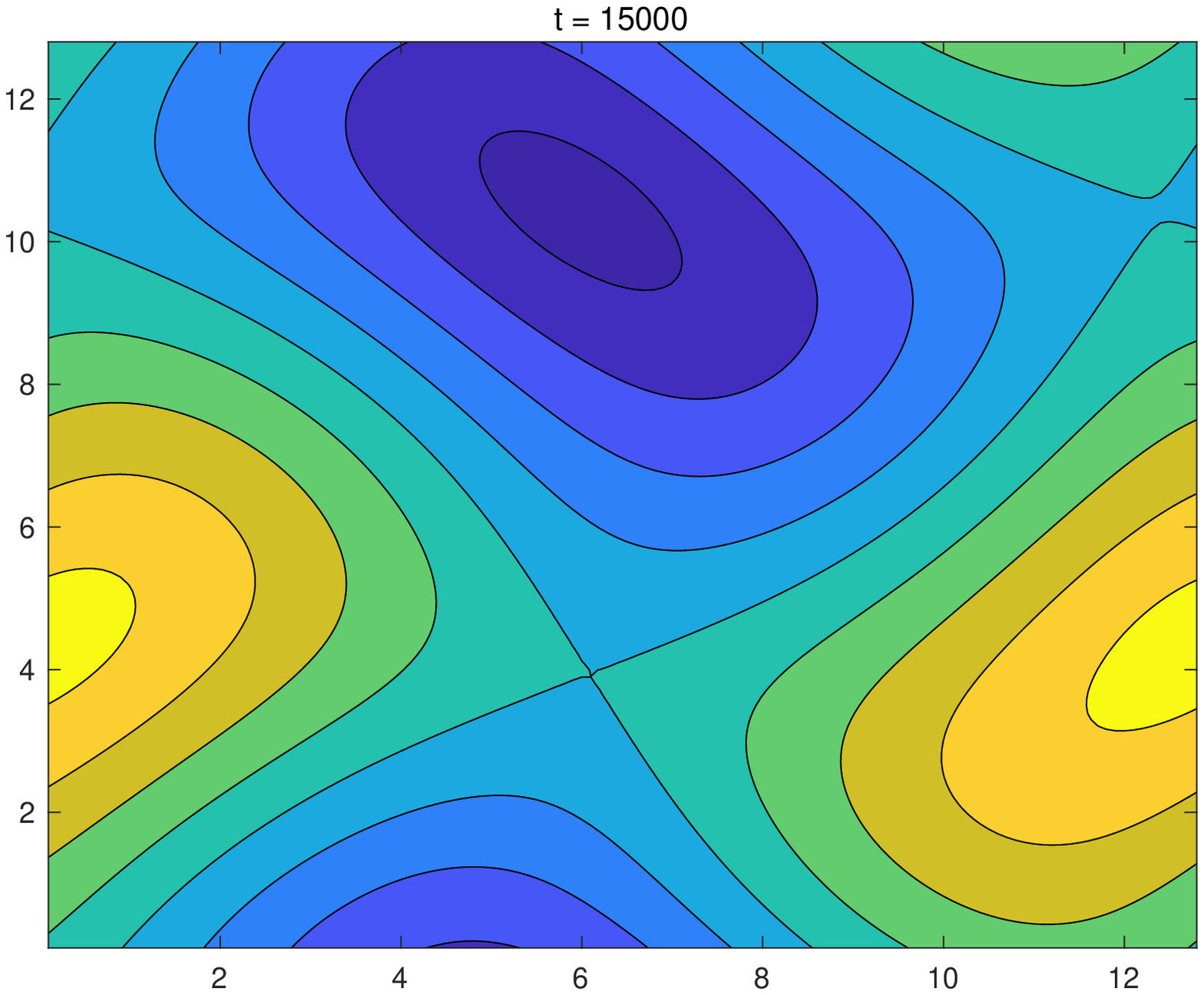}
 			\end{minipage}
 			\begin{minipage}{0.3\textwidth}
 				\includegraphics[width=\textwidth]{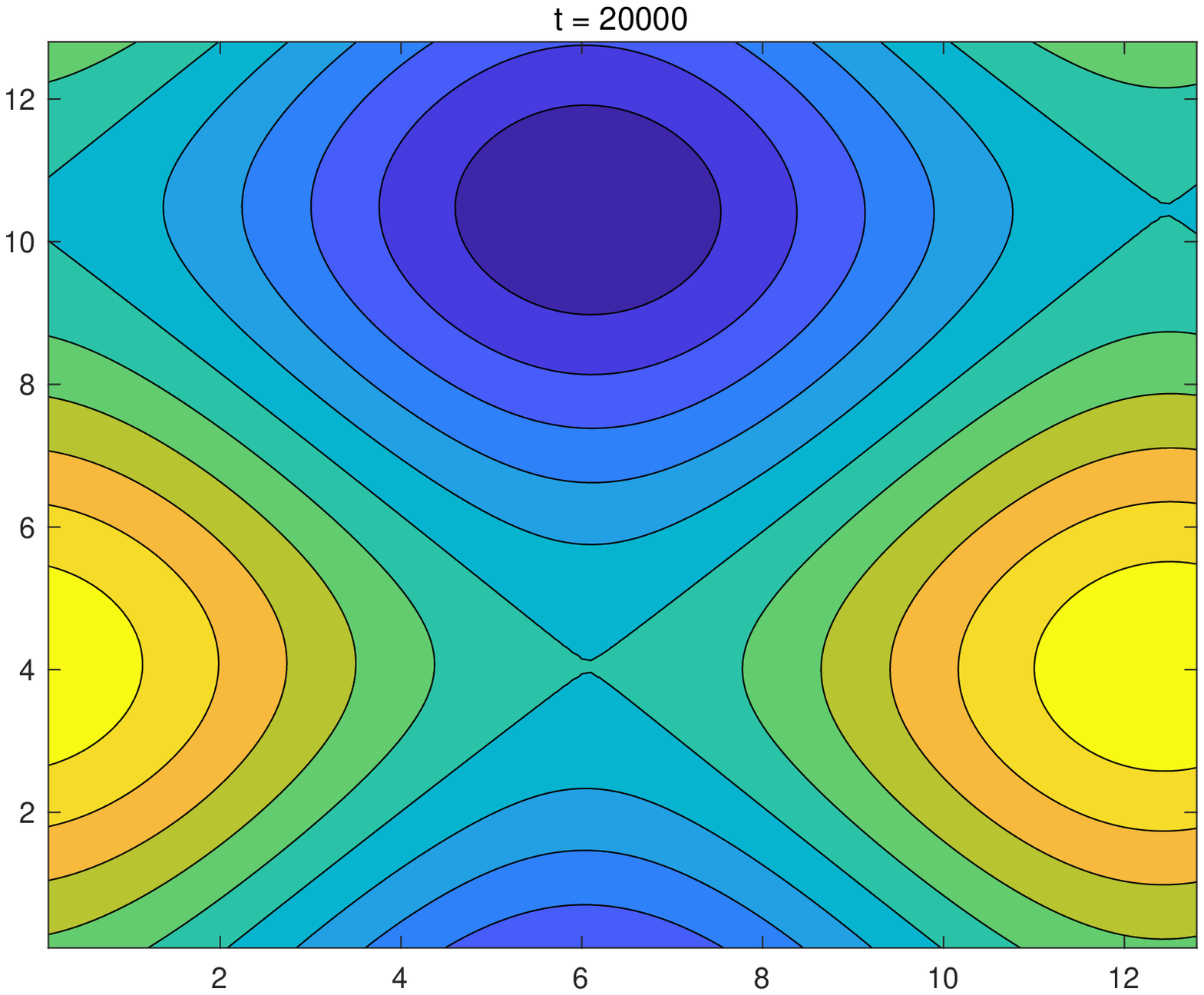}
 			\end{minipage}
 			\begin{minipage}{0.3\textwidth}
 				\includegraphics[width=\textwidth]{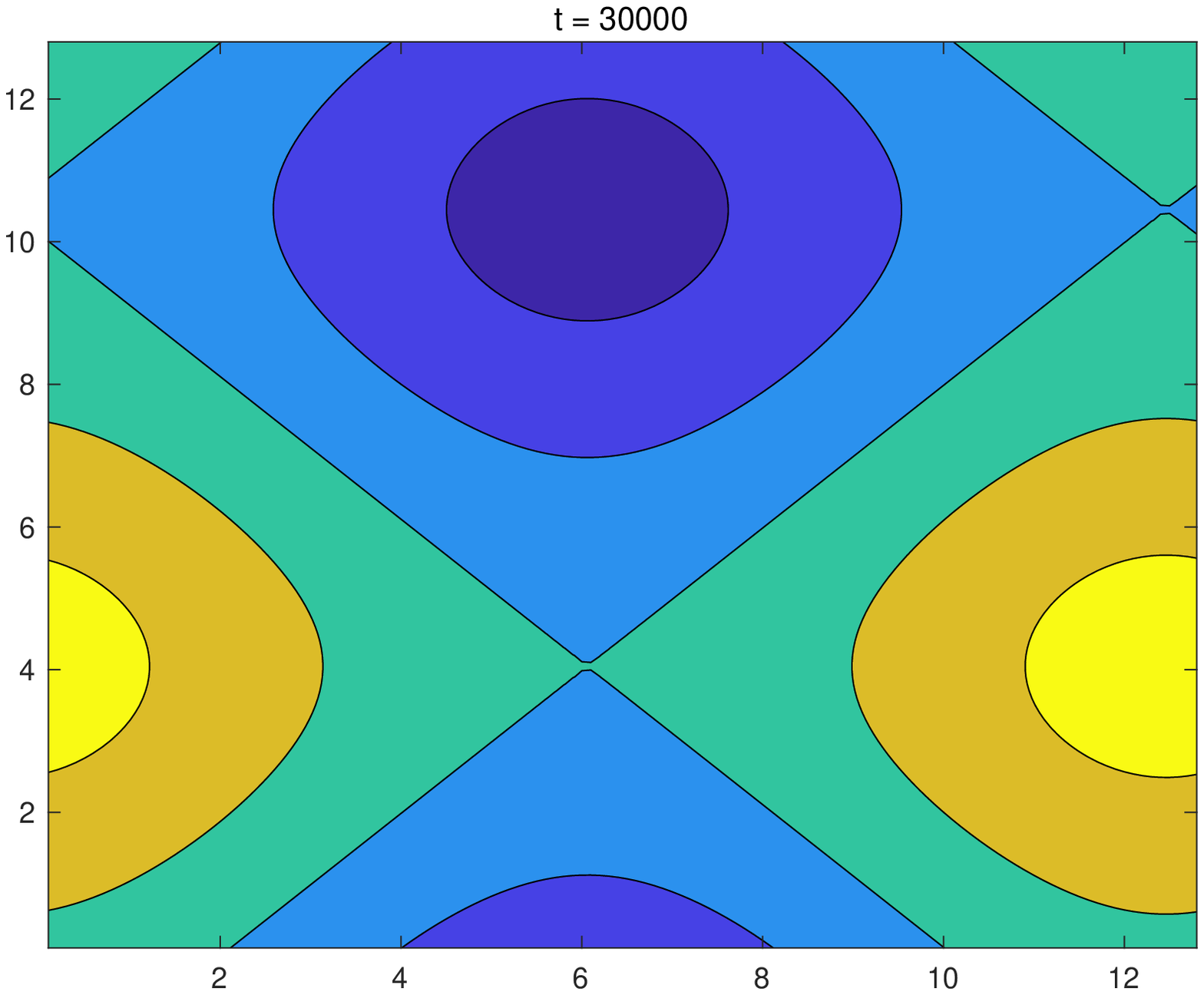}
 			\end{minipage}
 		}
 		\caption{Snapshots of the numerical solutions of scheme \eqref{eq:etdms nss} with $A=\frac{27\left(1+\bm{\overline{C}}_1\right)^4}{512}$, $\tau=10^{-3}$.}\label{fig: ss}
 	\end{figure}
 	
 	\begin{figure}[ht]
 		\centering
 		\noindent\makebox[\textwidth][c] {
 			\begin{minipage}{0.3\textwidth}
 				\includegraphics[width=\textwidth]{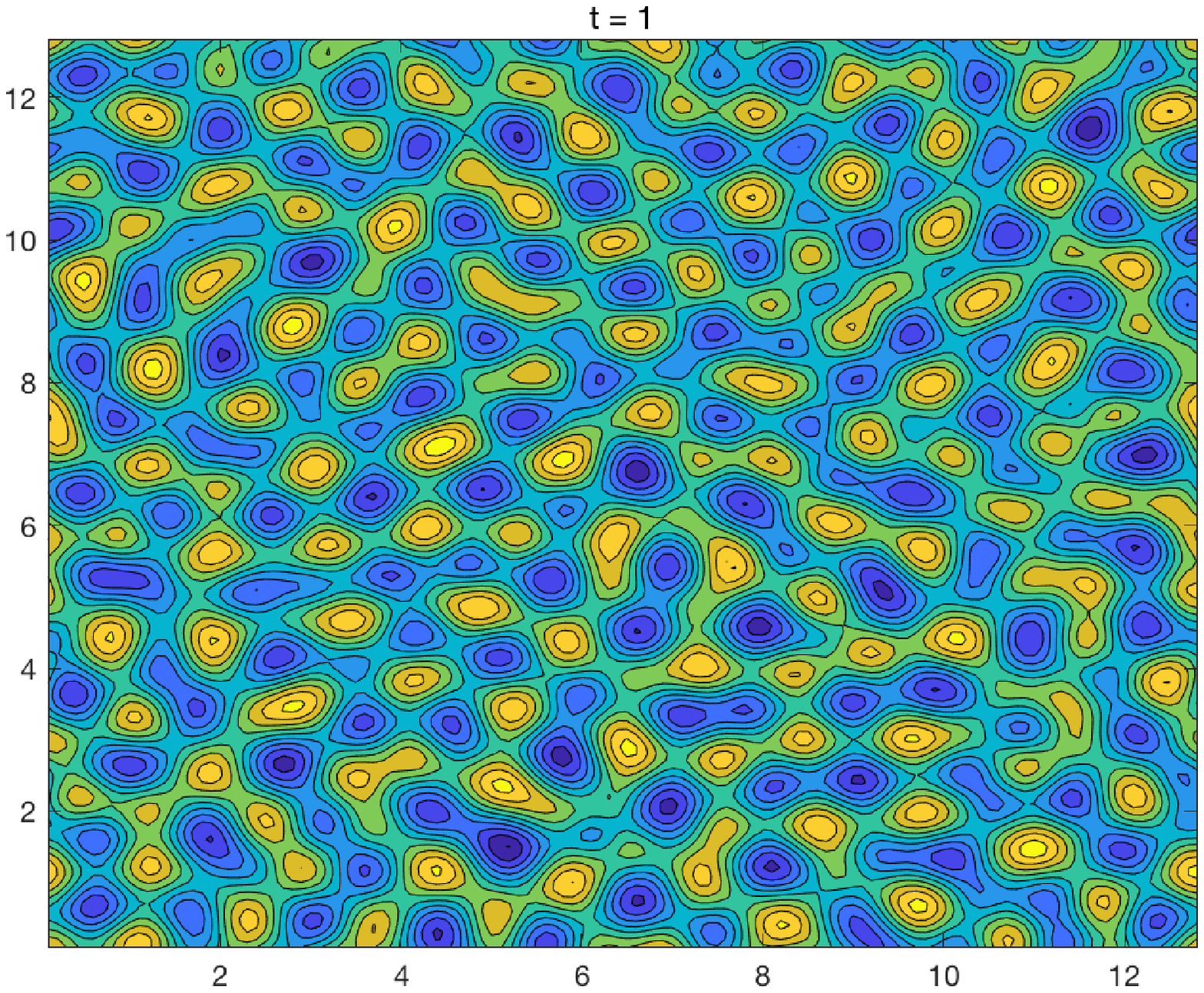}
 			\end{minipage}
 			\begin{minipage}{0.3\textwidth}
 				\includegraphics[width=\textwidth]{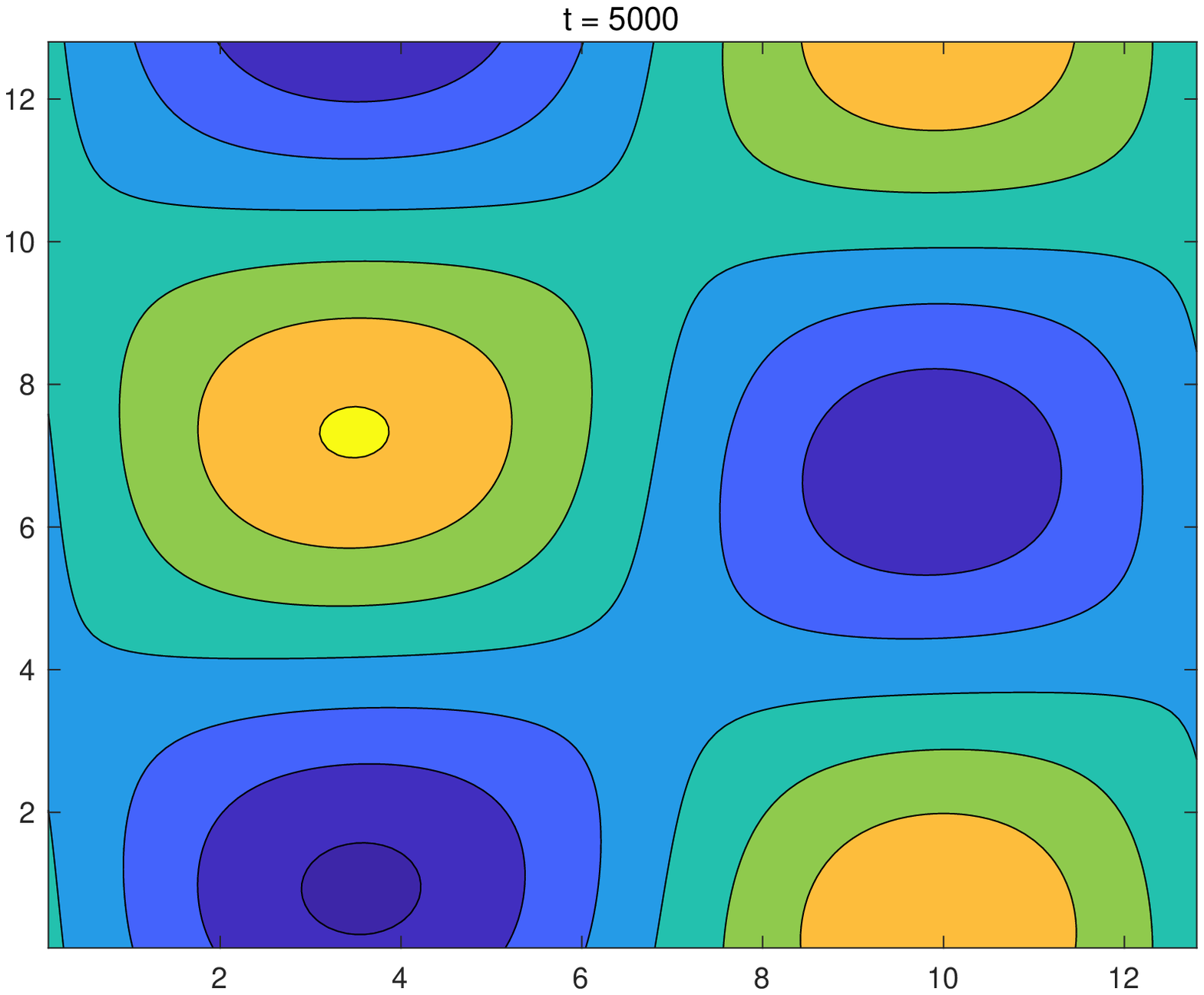}
 			\end{minipage}
 			\begin{minipage}{0.3\textwidth}
 				\includegraphics[width=\textwidth]{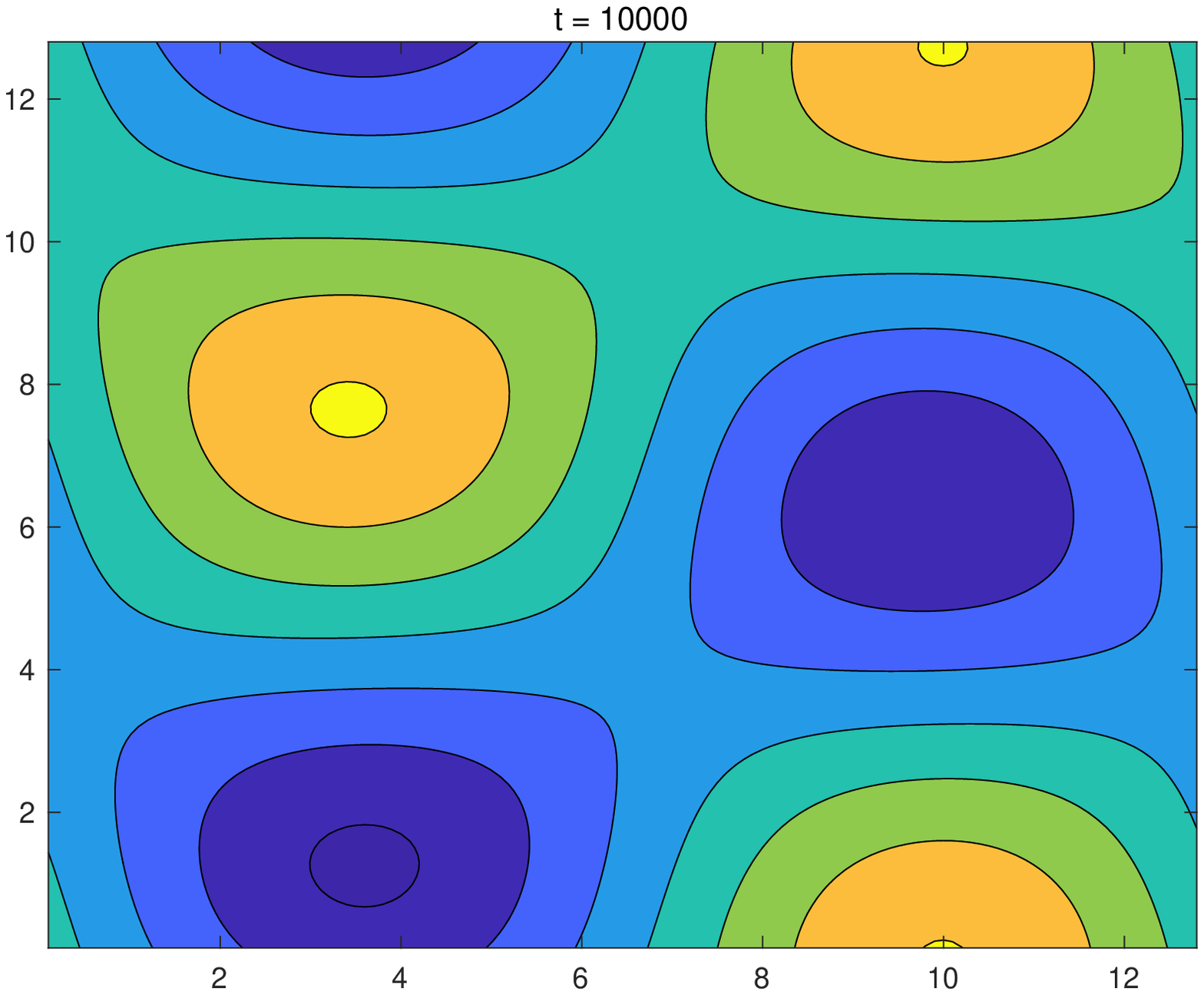}
 			\end{minipage}
 		}
 		\noindent\makebox[\textwidth][c] {
 			\begin{minipage}{0.3\textwidth}
 				\includegraphics[width=\textwidth]{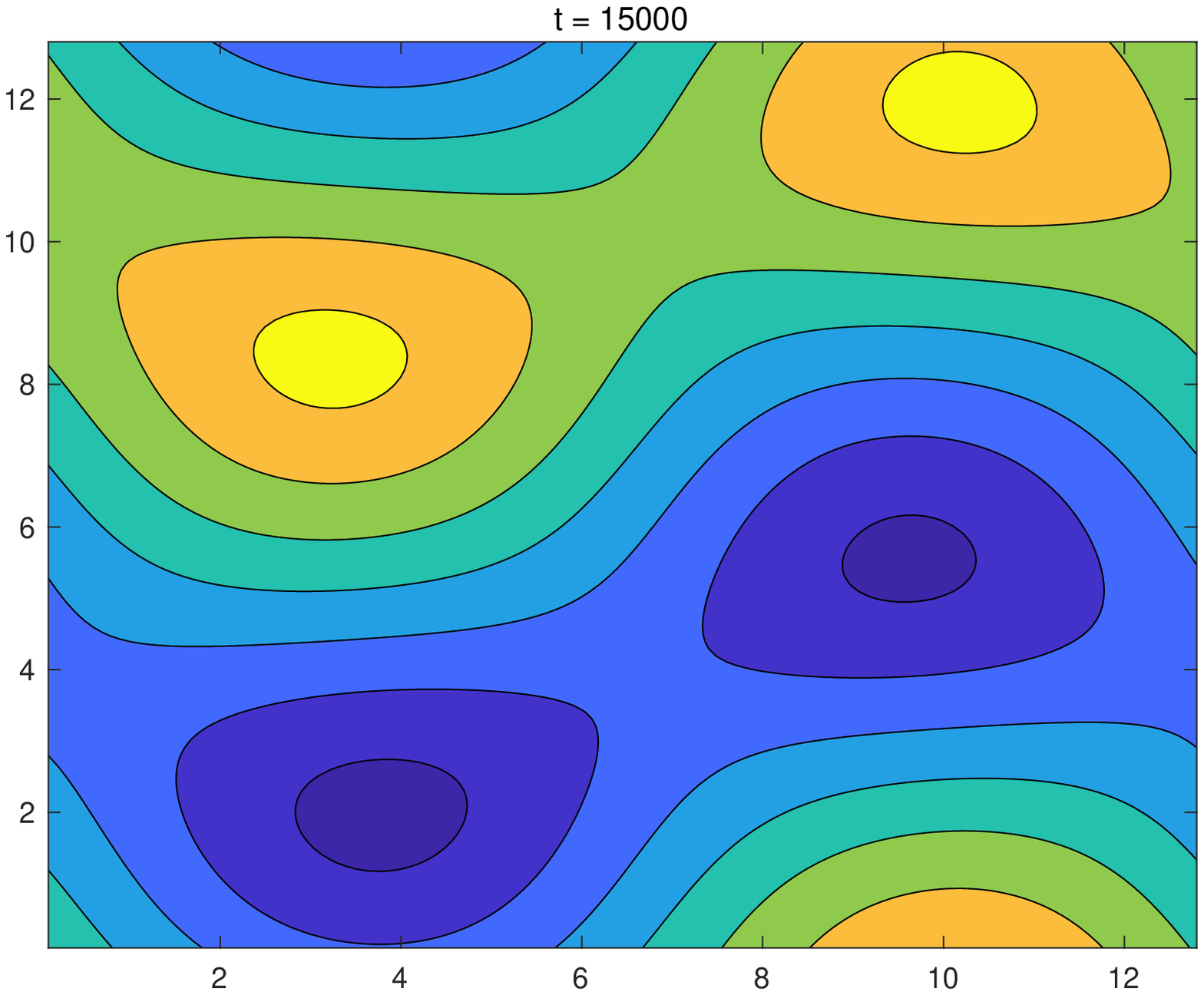}
 			\end{minipage}
 			\begin{minipage}{0.3\textwidth}
 				\includegraphics[width=\textwidth]{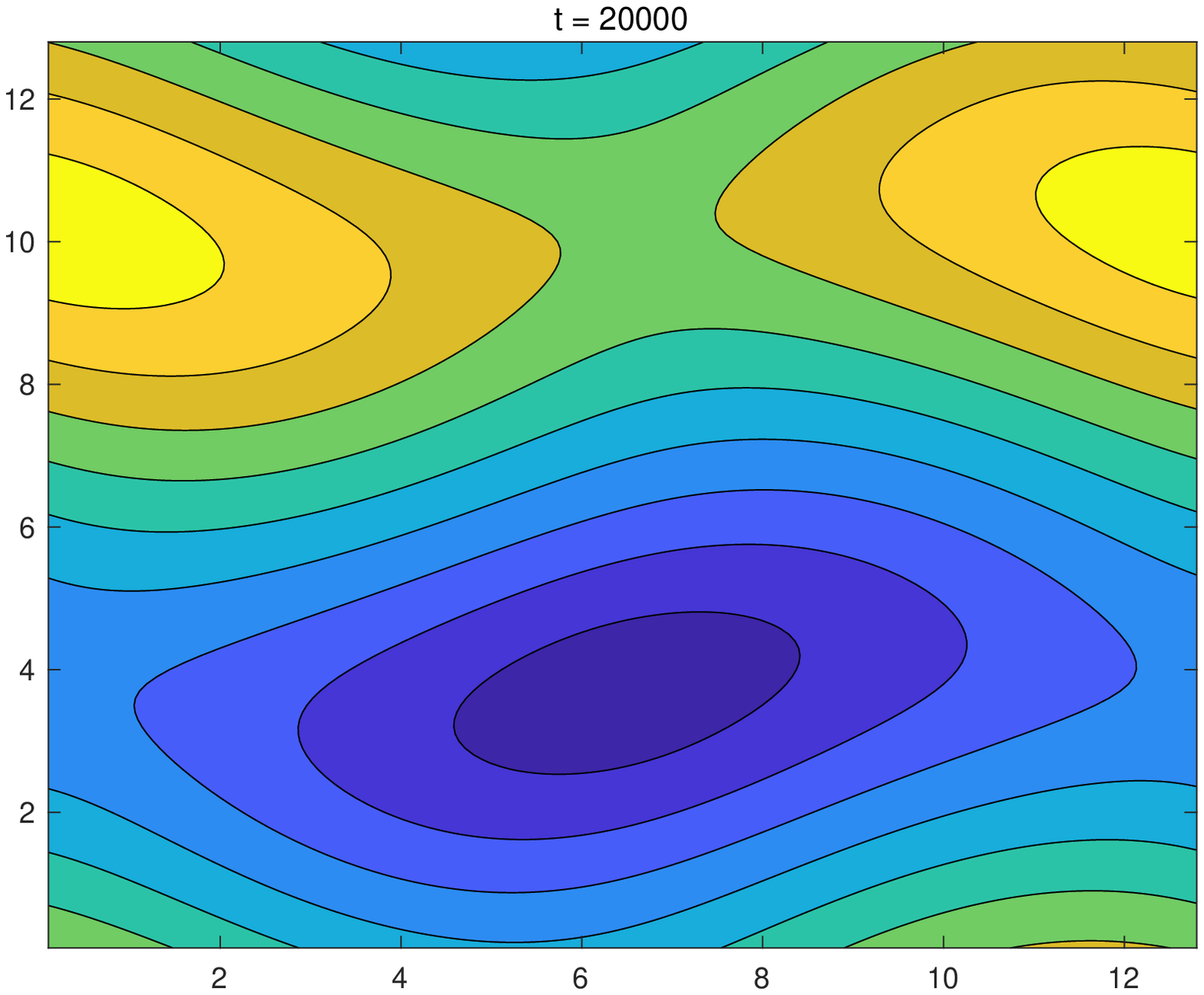}
 			\end{minipage}
 			\begin{minipage}{0.3\textwidth}
 				\includegraphics[width=\textwidth]{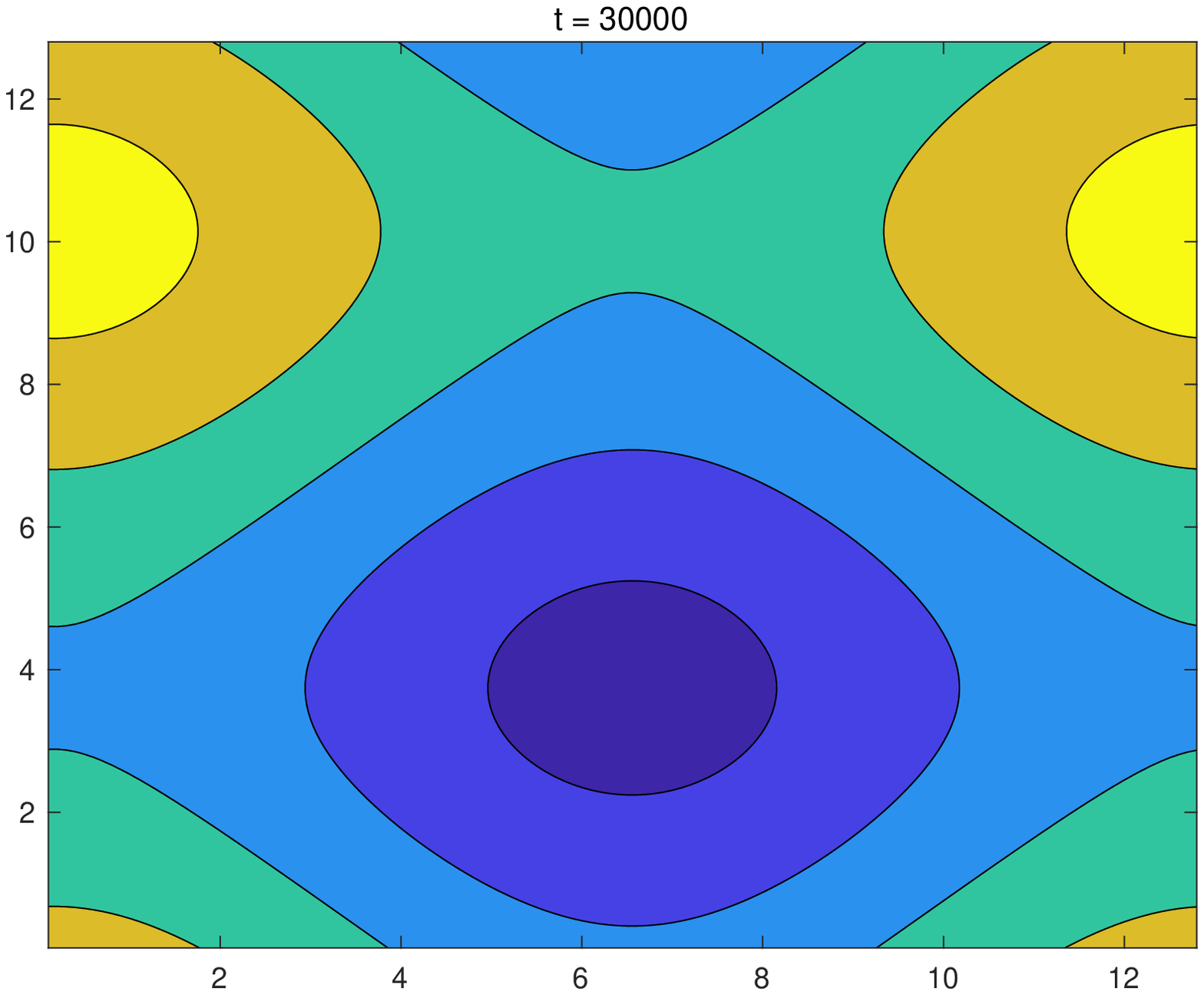}
 			\end{minipage}
 		}
 		\caption{Snapshots of the numerical solutions of scheme \eqref{eq:etdms nss} with $A=10$, $\tau=10^{-3}$.}\label{fig: ss2}
 	\end{figure}
 	
 	\begin{figure}[ht]
 		\begin{center}
 			\includegraphics[width =0.8\textwidth]{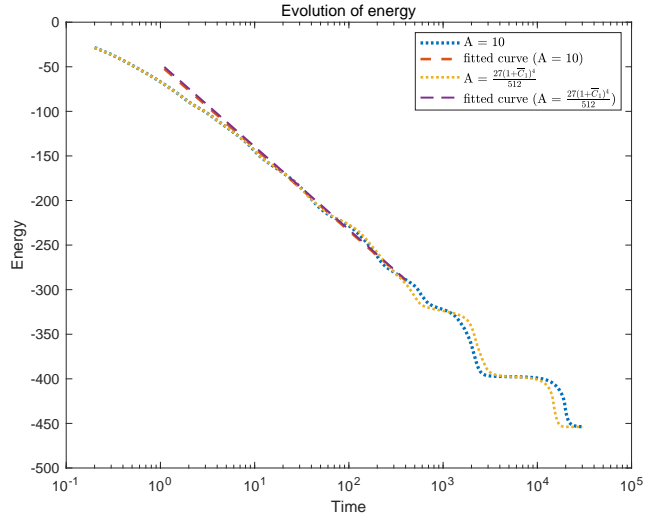}
 			\caption{Semi-log plot of the energy $E$ of scheme scheme \eqref{eq:etdms nss} with $\tau=10^{-3}$. Fitted line has the form $a \ln (t) + b$, with coefficients $a = -40.27$, $b = -48.5$ for $A=10$  and $a = -40.45$, $b = -46.35$ for $A= \frac{27\left(1+\bm{\overline{C}}_1\right)^4}{512}$.} \label{fig: energy}
 		\end{center}
 	\end{figure}
 	
 	\begin{figure}[ht]
 		\begin{center}
 			\includegraphics[width=0.8\textwidth]{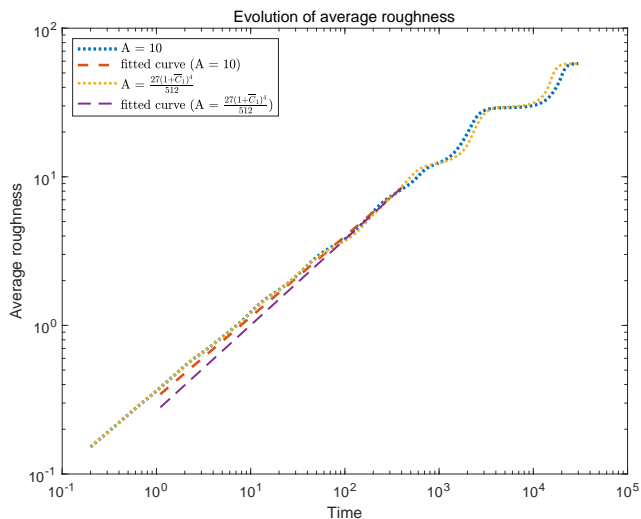}
 			\caption{The log-log plot of the average surface roughness $h$  of \eqref{eq:etdms nss} with  $\tau=10^{-3}$. Fitted lines have the form $a t^b$, with coefficients $a = 0.3269$, $b = 0.5423$ for $A=10$  and $a = 0.2657$, $b = 0.5771$ for $A= \frac{27\left(1+\bm{\overline{C}}_1\right)^4}{512}$.} \label{fig: roughness}
 		\end{center}
 	\end{figure}
 	
 	\begin{figure}[ht]
 		\begin{center}
 			\includegraphics[width=0.8\textwidth]{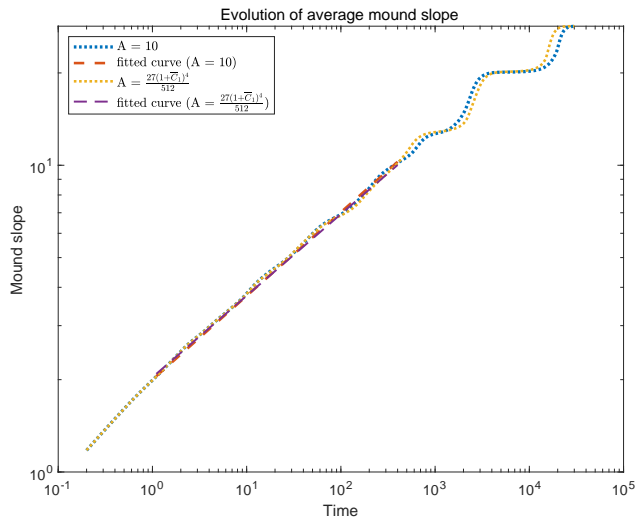}
 			\caption{The log-log plot of the average slope $m$ of \eqref{eq:etdms nss} with $\tau=10^{-3}$. Fitted lines have the form $a t^b$, with coefficients $a = 2$, $b = 0.2729$ for $A=10$  and $a = 2.036$, $b = 0.2659$ for $A= \frac{27\left(1+\bm{\overline{C}}_1\right)^4}{512}$.} \label{fig: slope}
 		\end{center}
 	\end{figure}

 	\begin{figure}[ht]
 		\centering
 		\noindent\makebox[\textwidth][c] {
 			\begin{minipage}{0.3\textwidth}
 				\includegraphics[width=\textwidth]{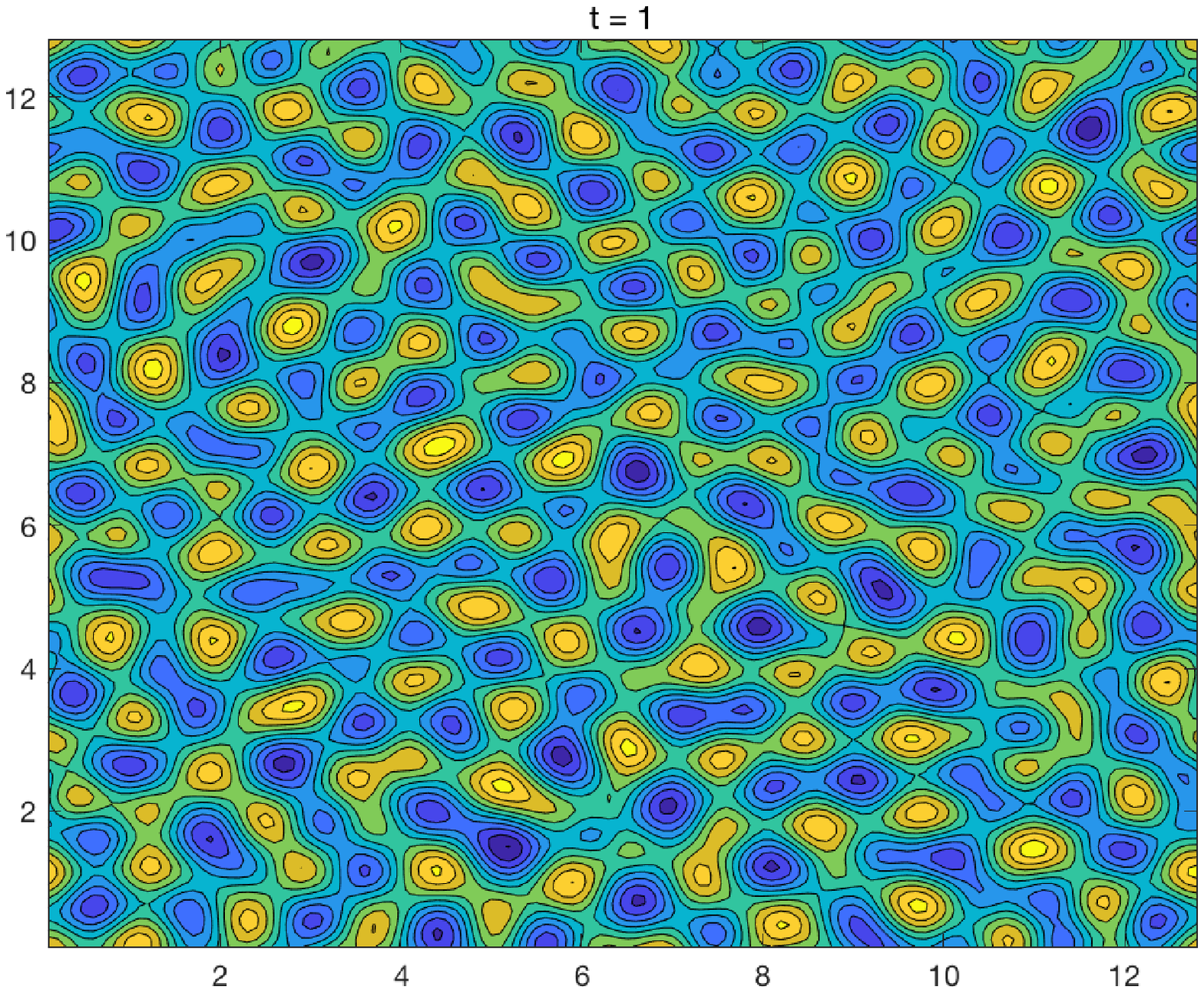}
 			\end{minipage}
 			\begin{minipage}{0.3\textwidth}
 				\includegraphics[width=\textwidth]{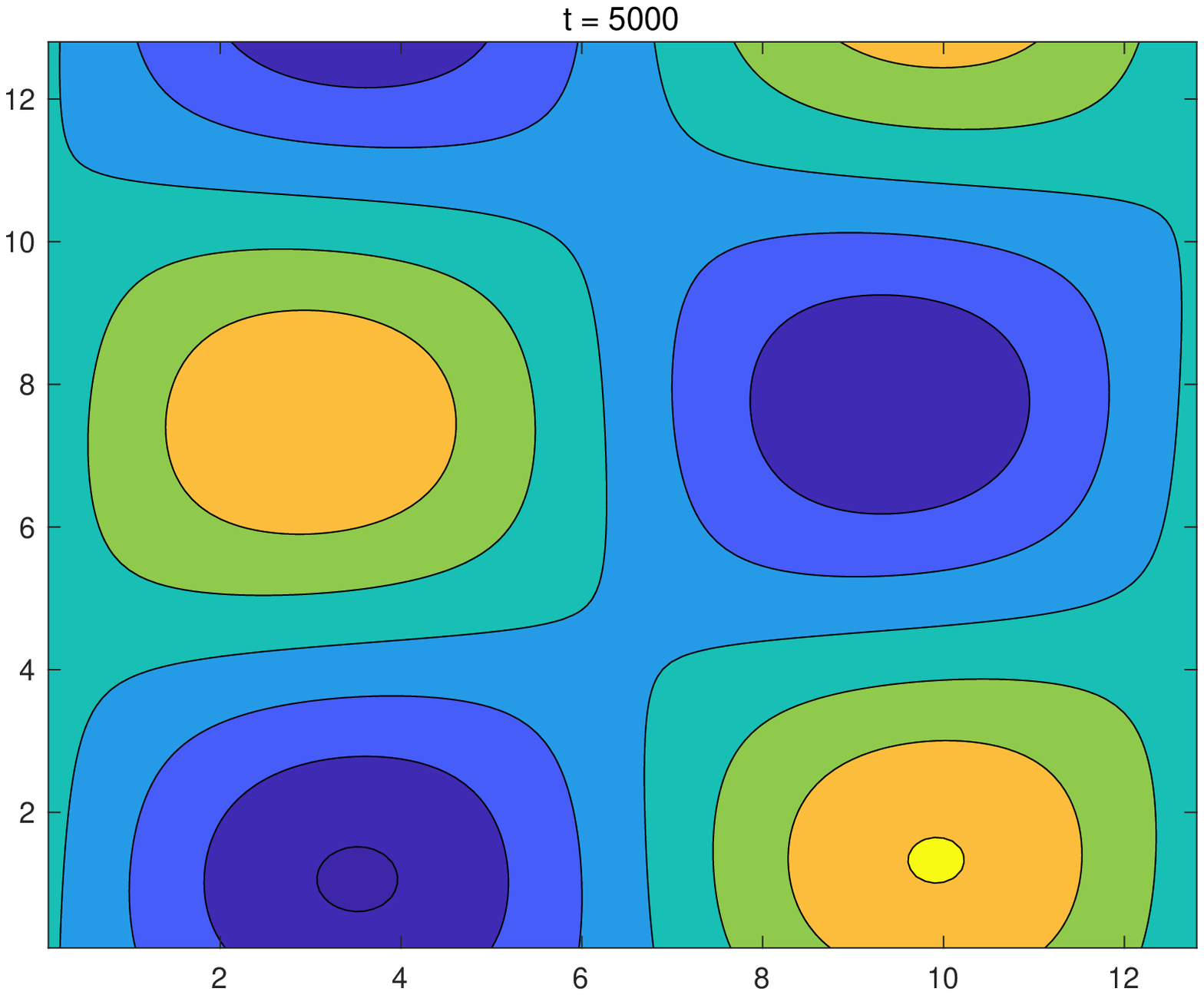}
 			\end{minipage}
 			\begin{minipage}{0.3\textwidth}
 				\includegraphics[width=\textwidth]{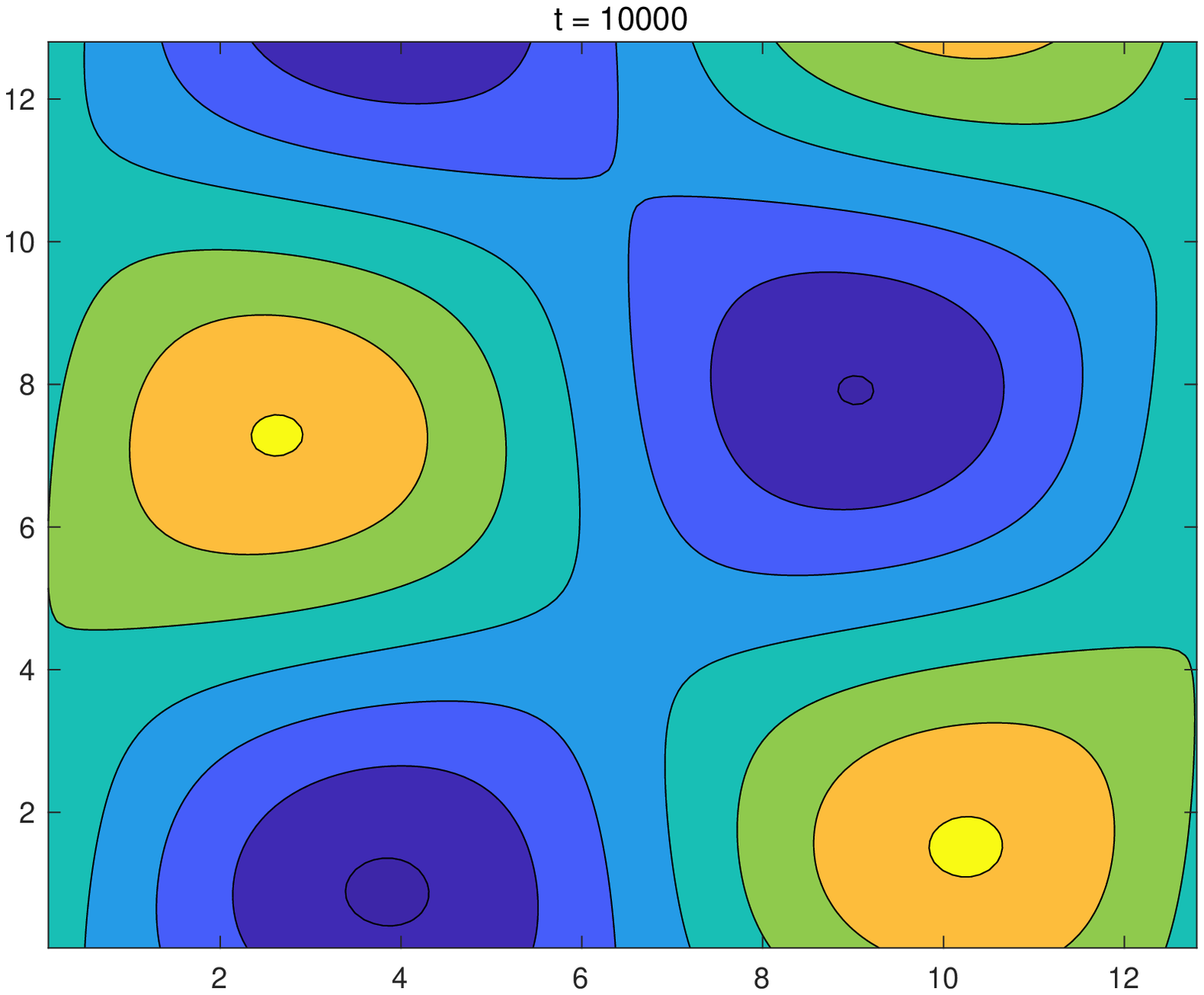}
 			\end{minipage}
 		}
 		\noindent\makebox[\textwidth][c] {
 			\begin{minipage}{0.3\textwidth}
 				\includegraphics[width=\textwidth]{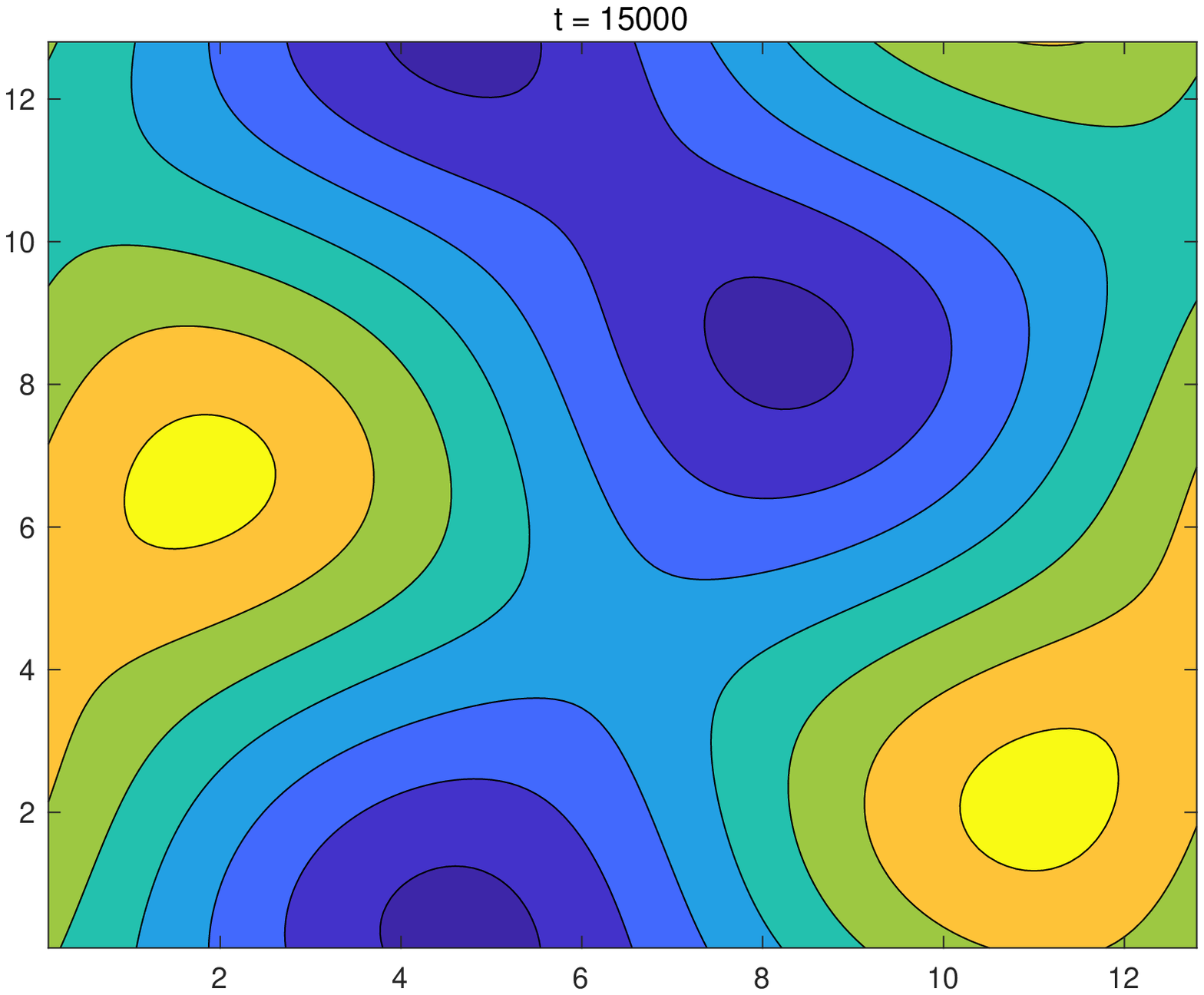}
 			\end{minipage}
 			\begin{minipage}{0.3\textwidth}
 				\includegraphics[width=\textwidth]{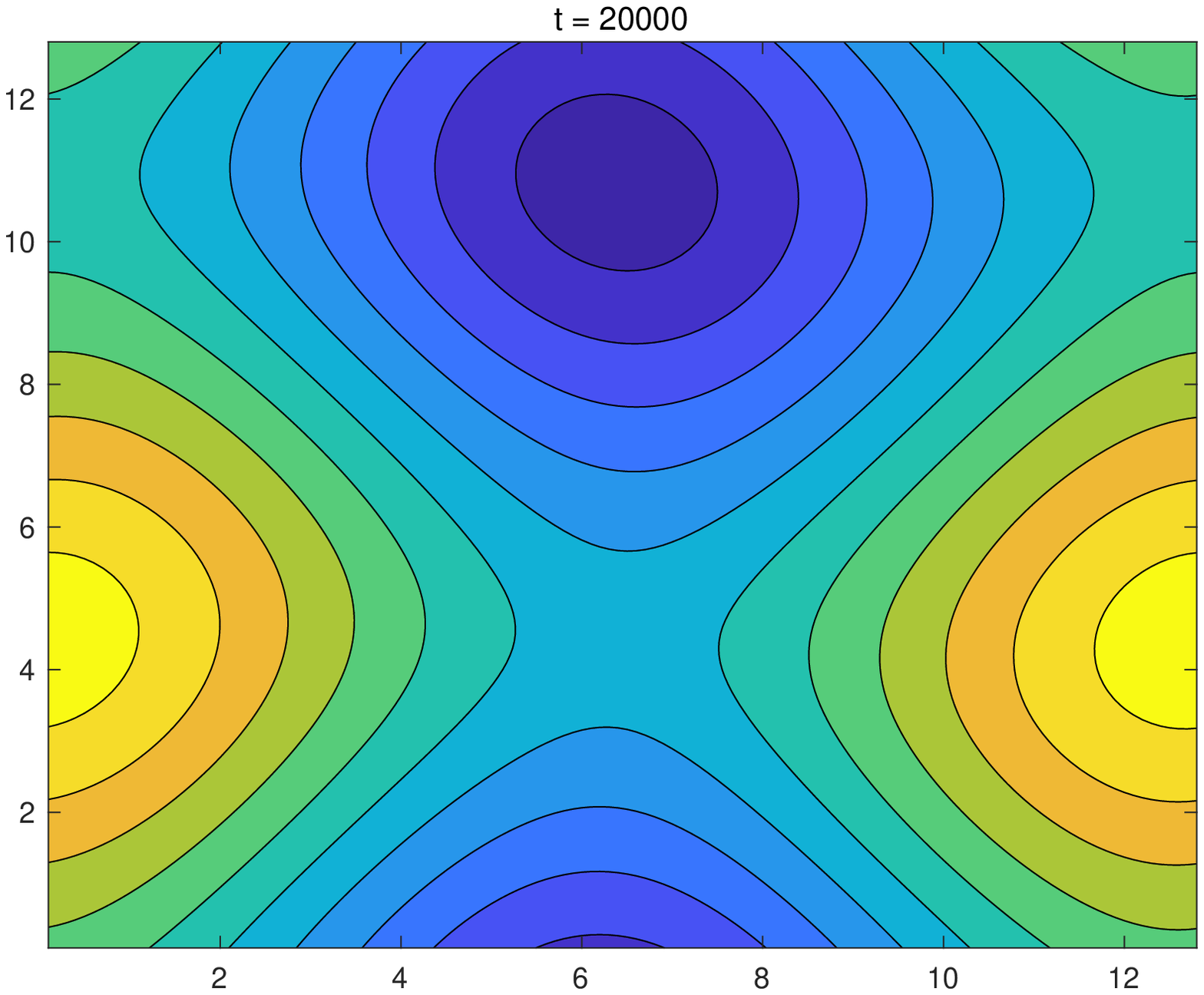}
 			\end{minipage}
 			\begin{minipage}{0.3\textwidth}
 				\includegraphics[width=\textwidth]{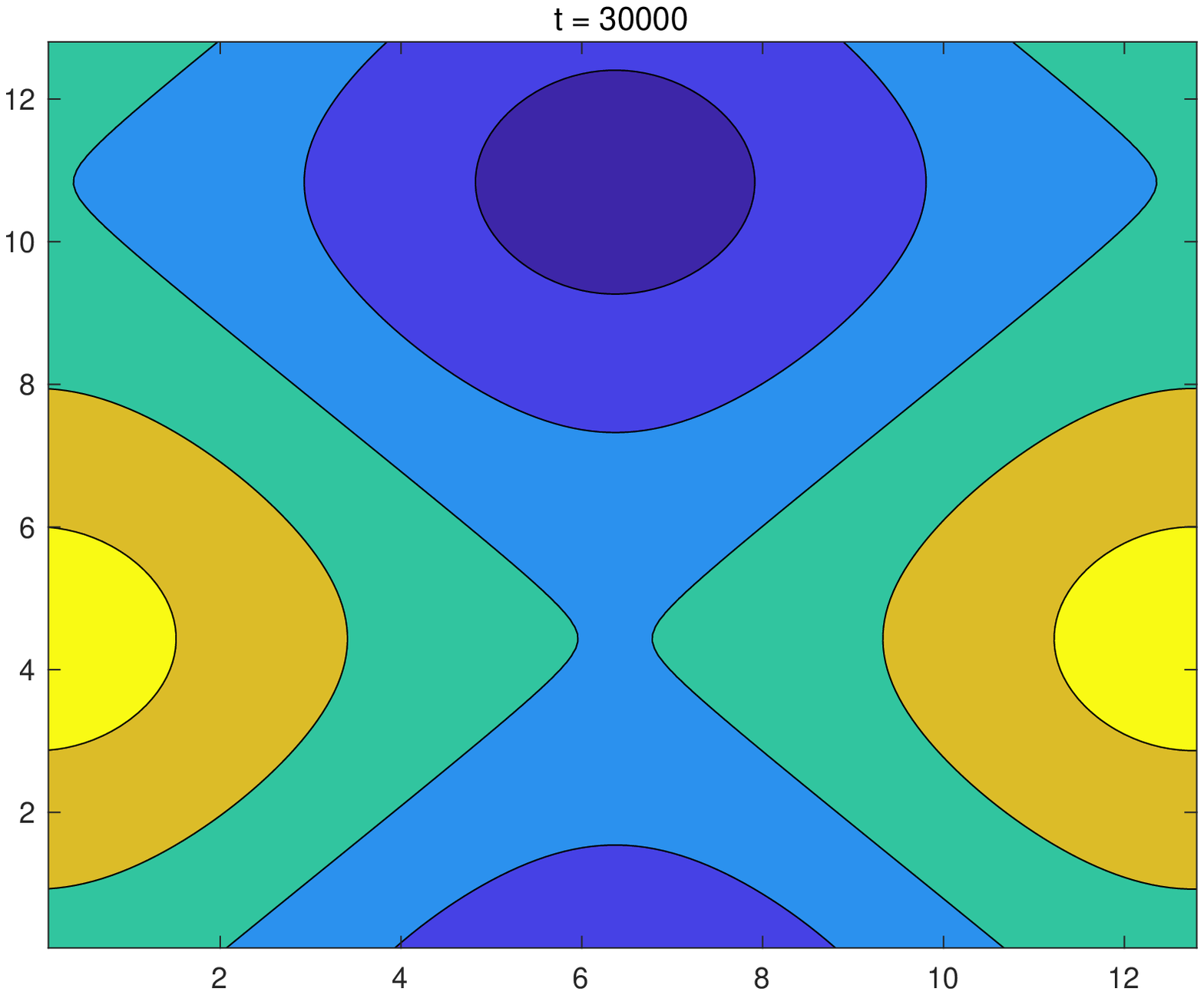}
 			\end{minipage}
 		}

 		\caption{Snapshots of the numerical solutions of scheme \eqref{eq:etdms nss} with $A=\frac{27\left(1+\bm{\overline{C}}_1\right)^4}{512}$ and variable time step-size: $\tau=10^{-6}$ for $t\le 1$, $\tau=10^{-5}$ for $1<t\le 10$, $\tau=10^{-4} $ for $10<t\le 100$ and $\tau=10^{-3}$ for $t>100$.}\label{fig: ss_1}
 	\end{figure}

  	\begin{figure}[ht]
 	\centering
 	\noindent\makebox[\textwidth][c] {
 		\begin{minipage}{0.3\textwidth}
 			\includegraphics[width=\textwidth]{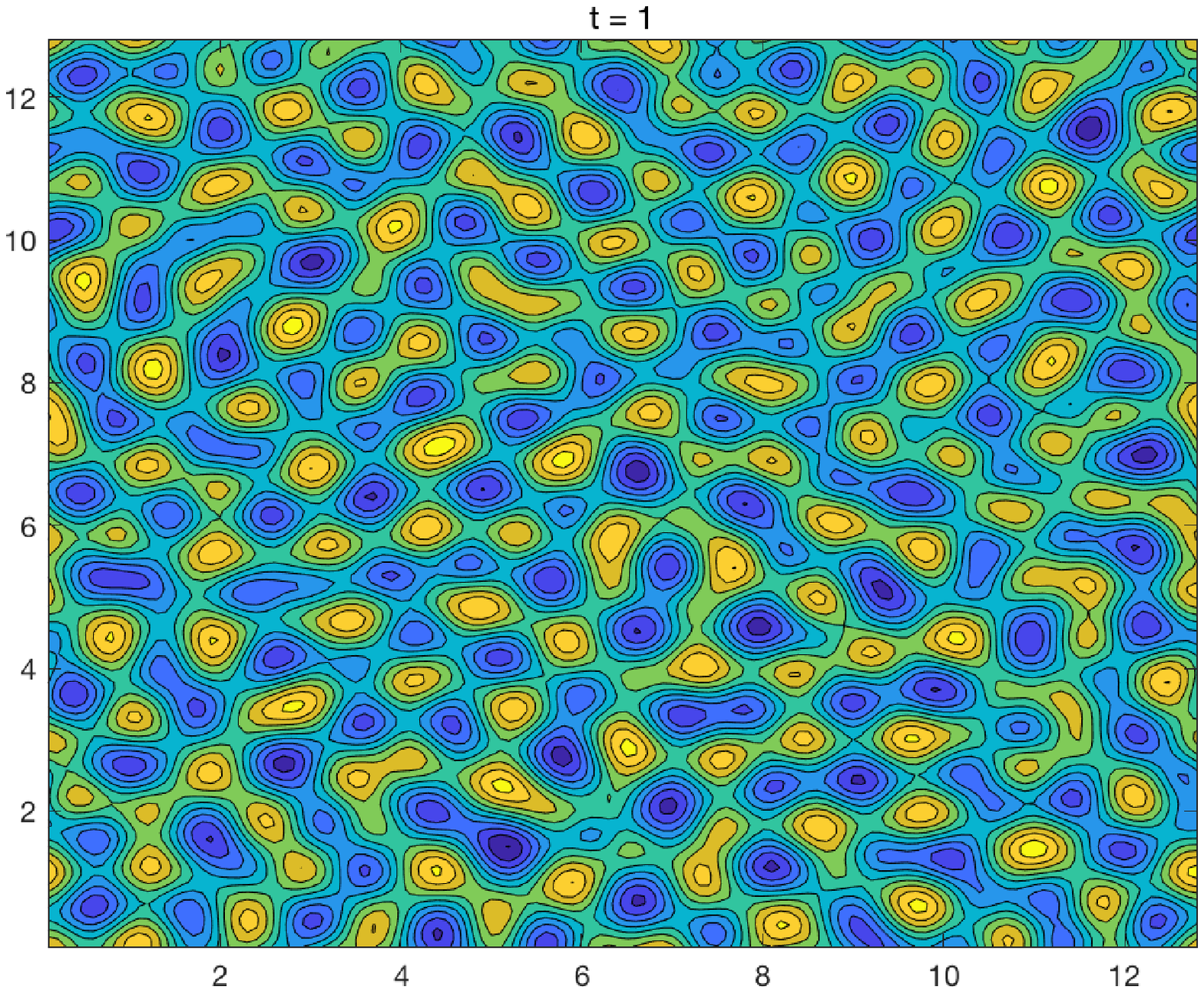}
 		\end{minipage}
 		\begin{minipage}{0.3\textwidth}
 			\includegraphics[width=\textwidth]{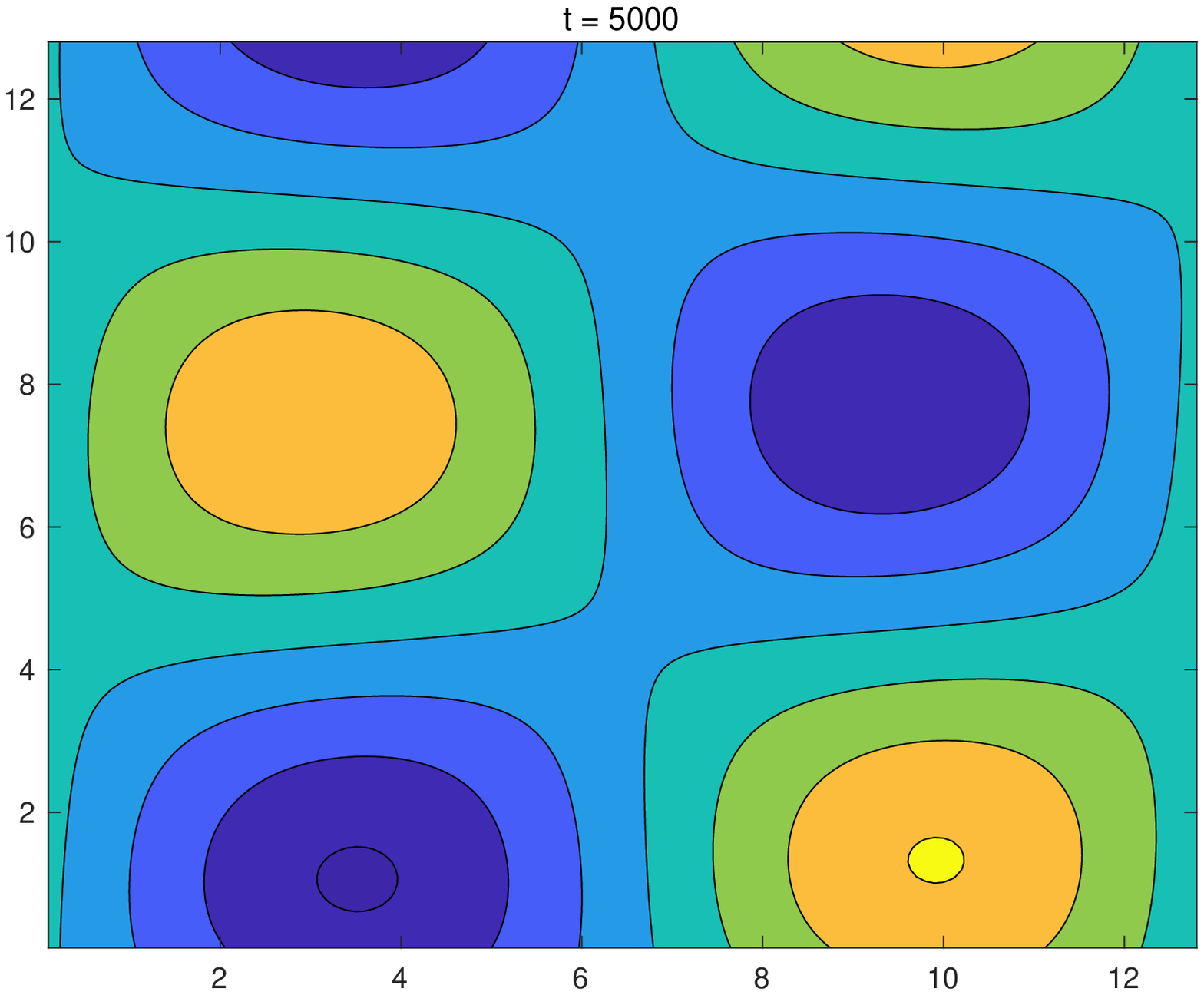}
 		\end{minipage}
 		\begin{minipage}{0.3\textwidth}
 			\includegraphics[width=\textwidth]{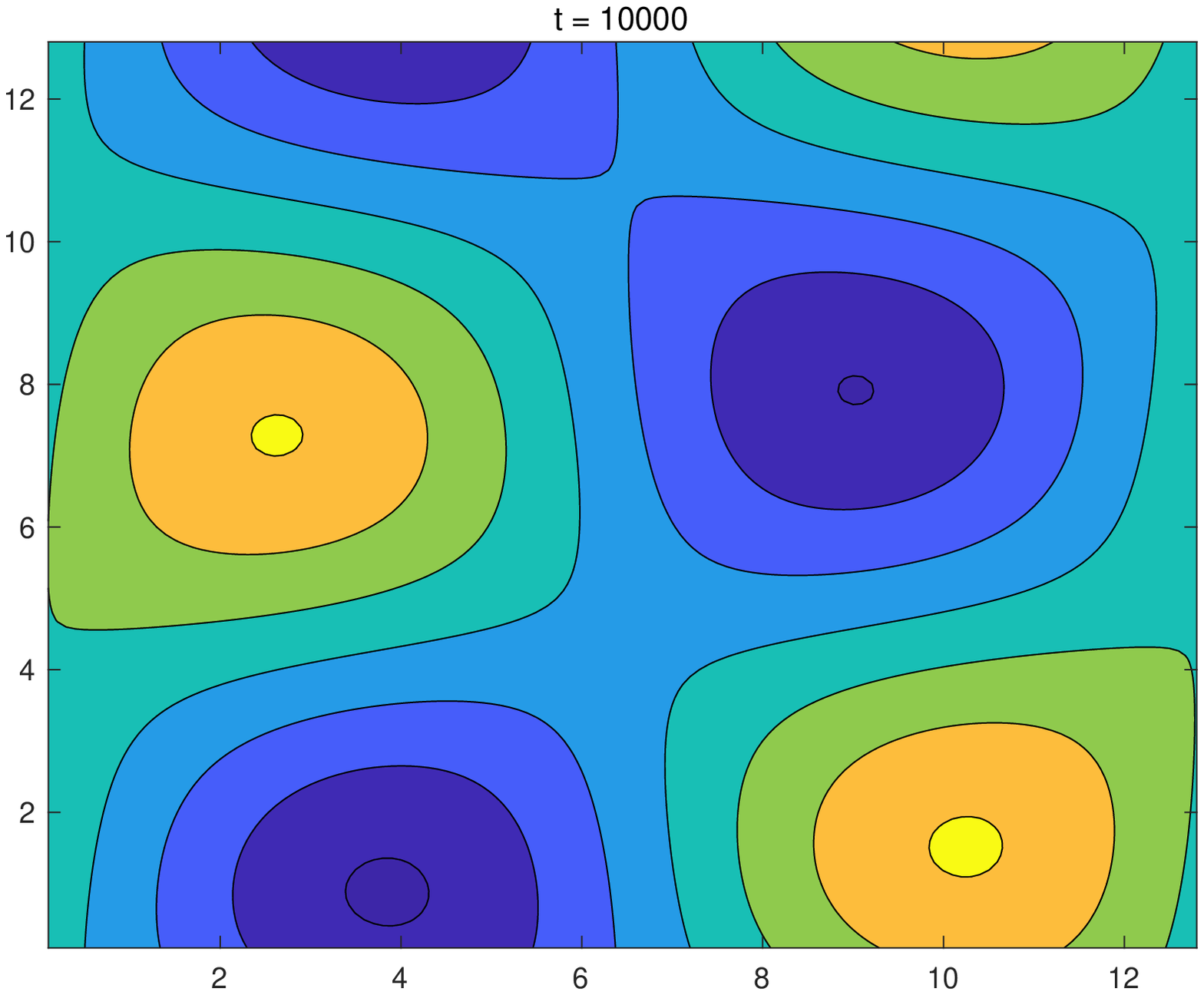}
 		\end{minipage}
 	}
 	\noindent\makebox[\textwidth][c] {
 		\begin{minipage}{0.3\textwidth}
 			\includegraphics[width=\textwidth]{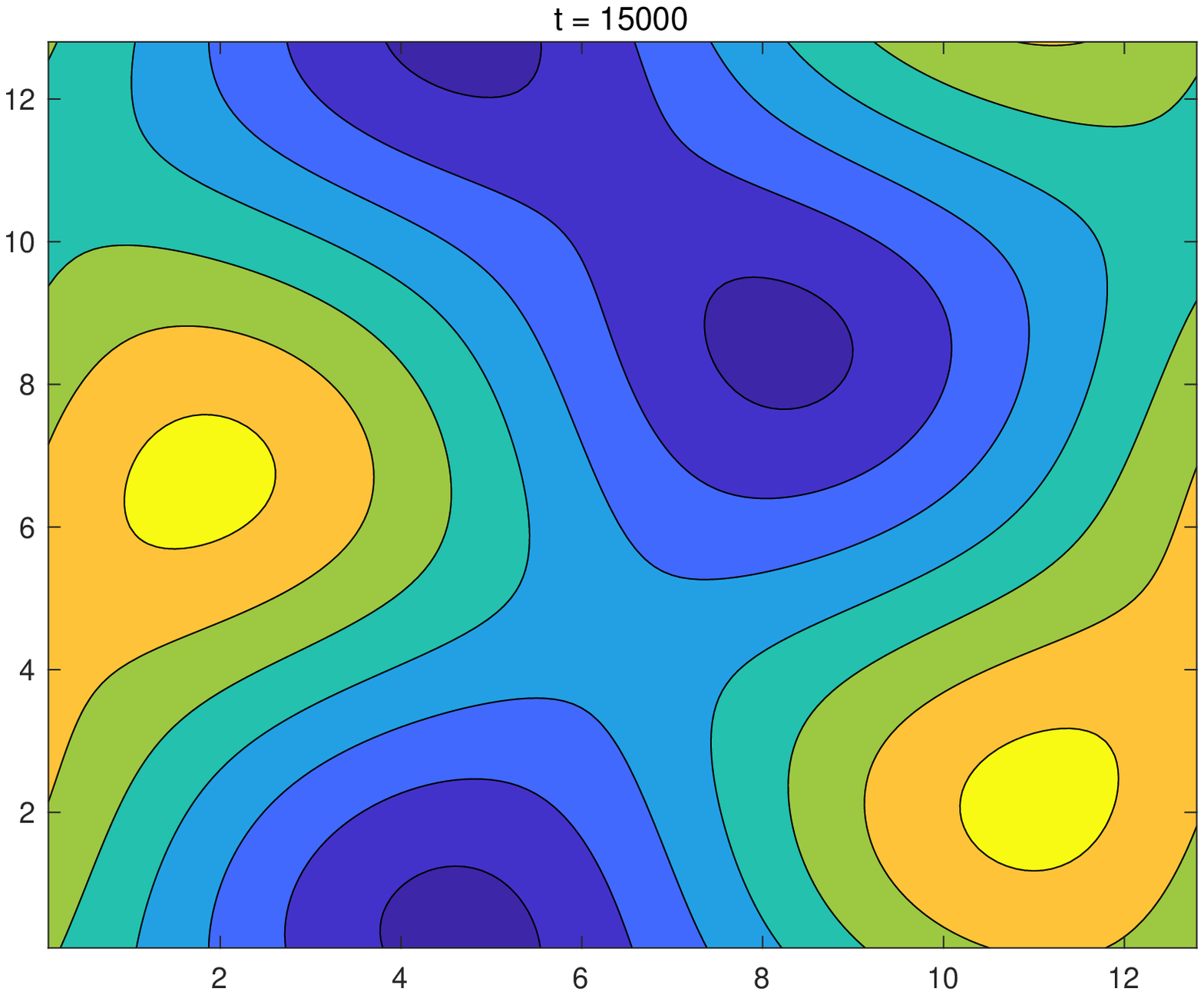}
 		\end{minipage}
 		\begin{minipage}{0.3\textwidth}
 			\includegraphics[width=\textwidth]{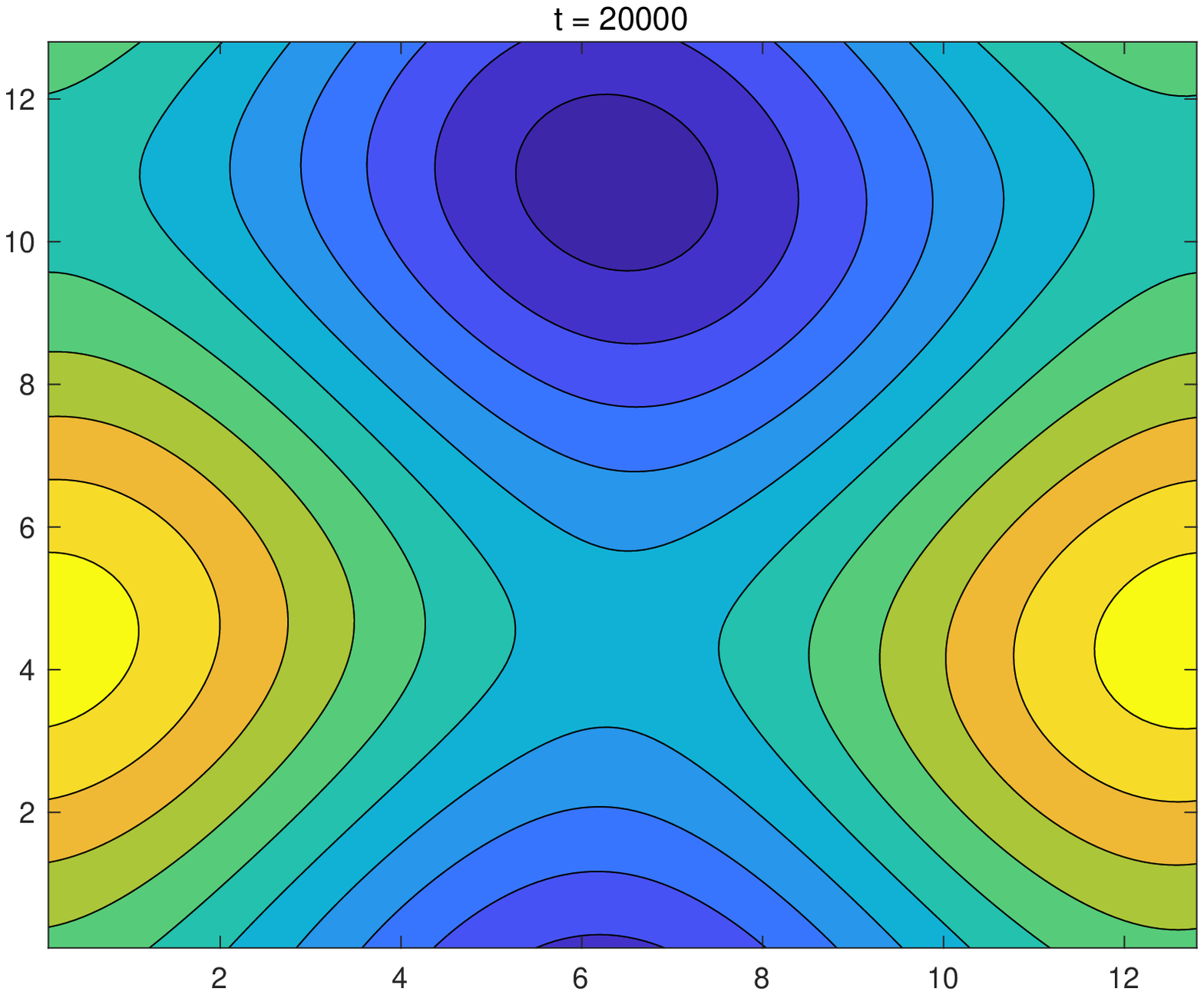}
 		\end{minipage}
 		\begin{minipage}{0.3\textwidth}
 			\includegraphics[width=\textwidth]{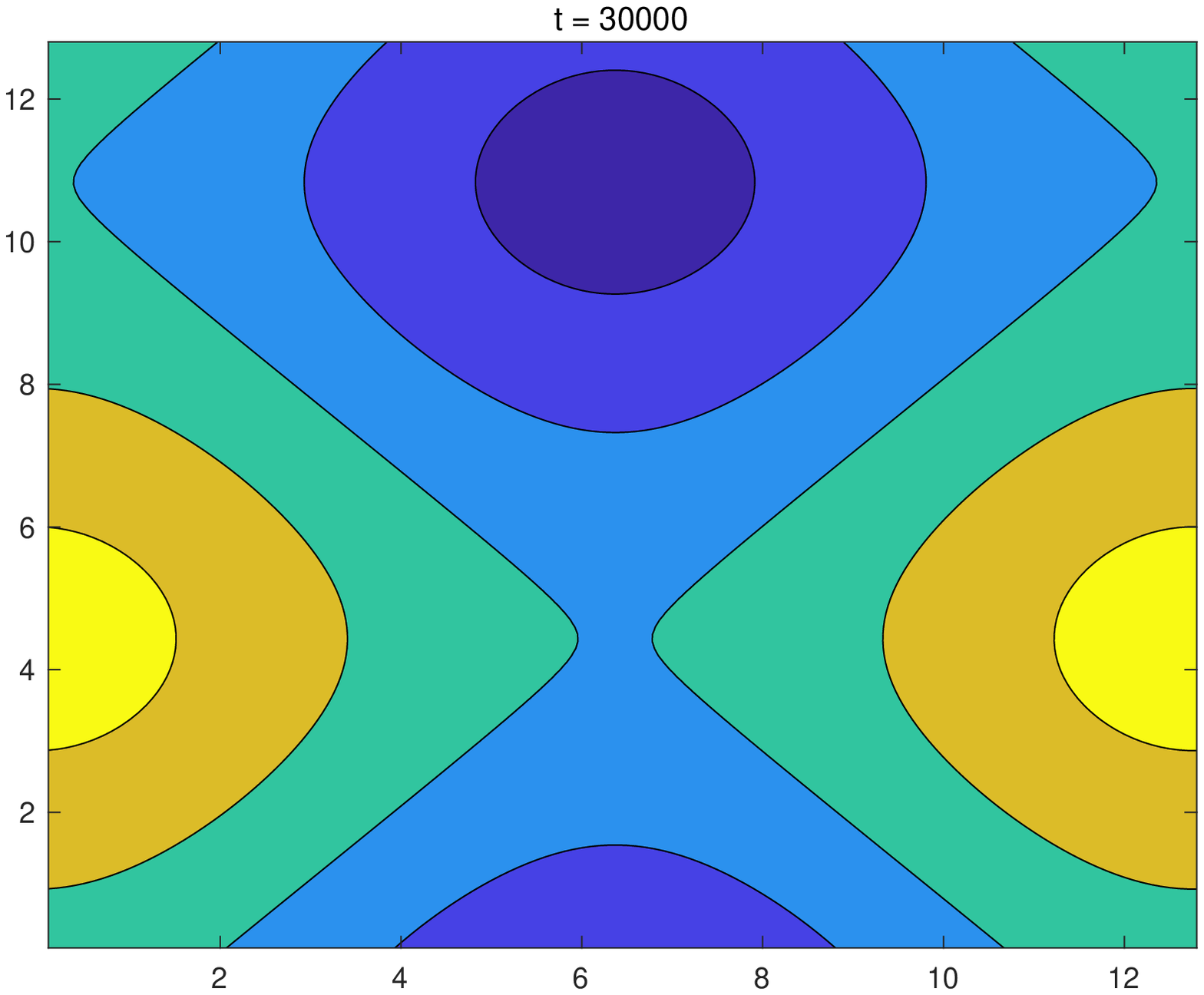}
 		\end{minipage}
 	}
 	\caption{Snapshots of the numerical solutions of scheme \eqref{eq:etdms nss} with $A=10$ and variable time step-size: $\tau=10^{-6}$ for $t\le 1$, $\tau=10^{-5}$ for $1<t\le 10$, $\tau=10^{-4} $ for $10<t\le 100$ and $\tau=10^{-3}$ for $t>100$.}\label{fig: ss_2}
 \end{figure}

  	\begin{figure}[ht]
	\centering
	\noindent\makebox[\textwidth][c] {
		\begin{minipage}{0.3\textwidth}
			\includegraphics[width=\textwidth]{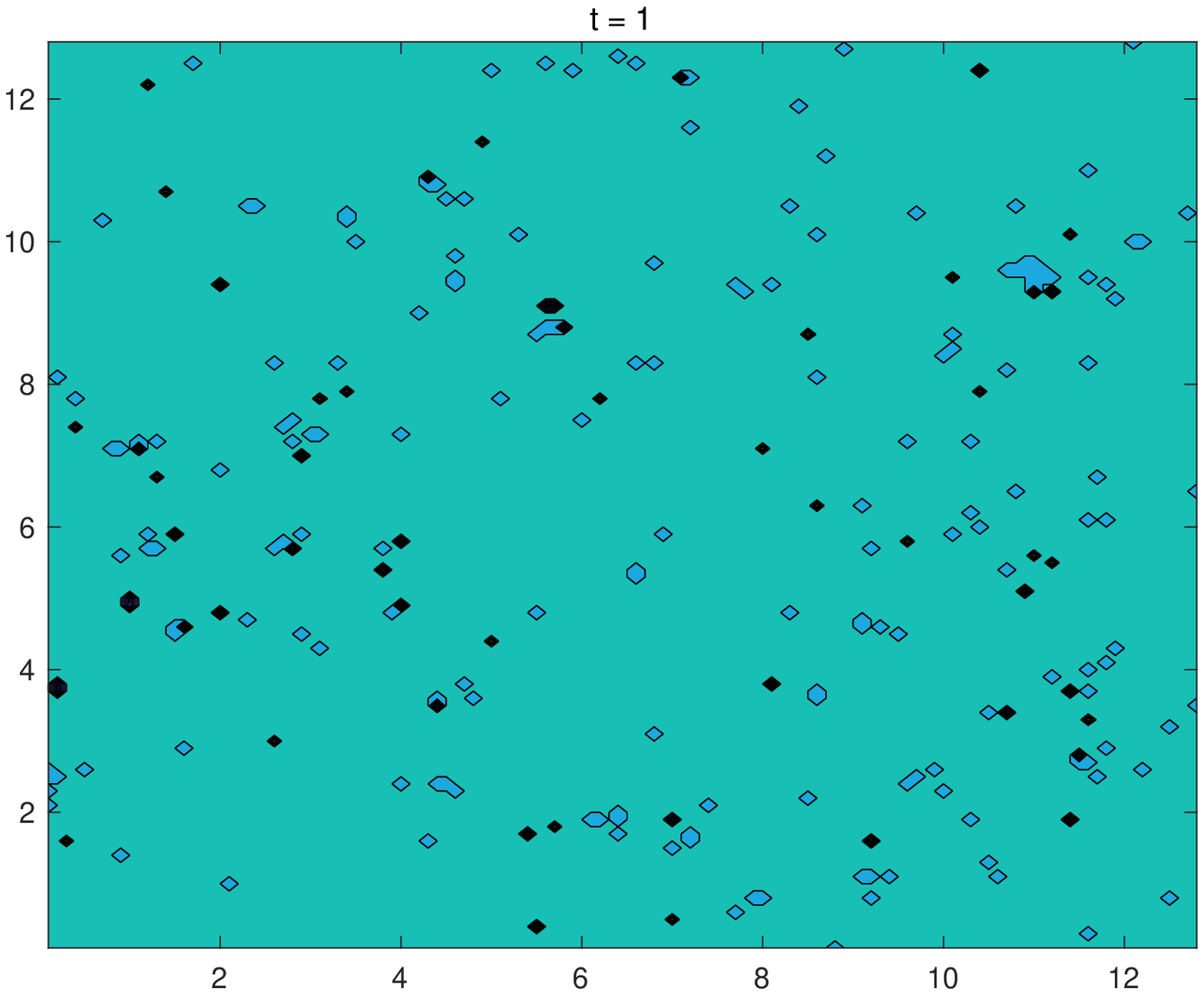}
		\end{minipage}
		\begin{minipage}{0.3\textwidth}
			\includegraphics[width=\textwidth]{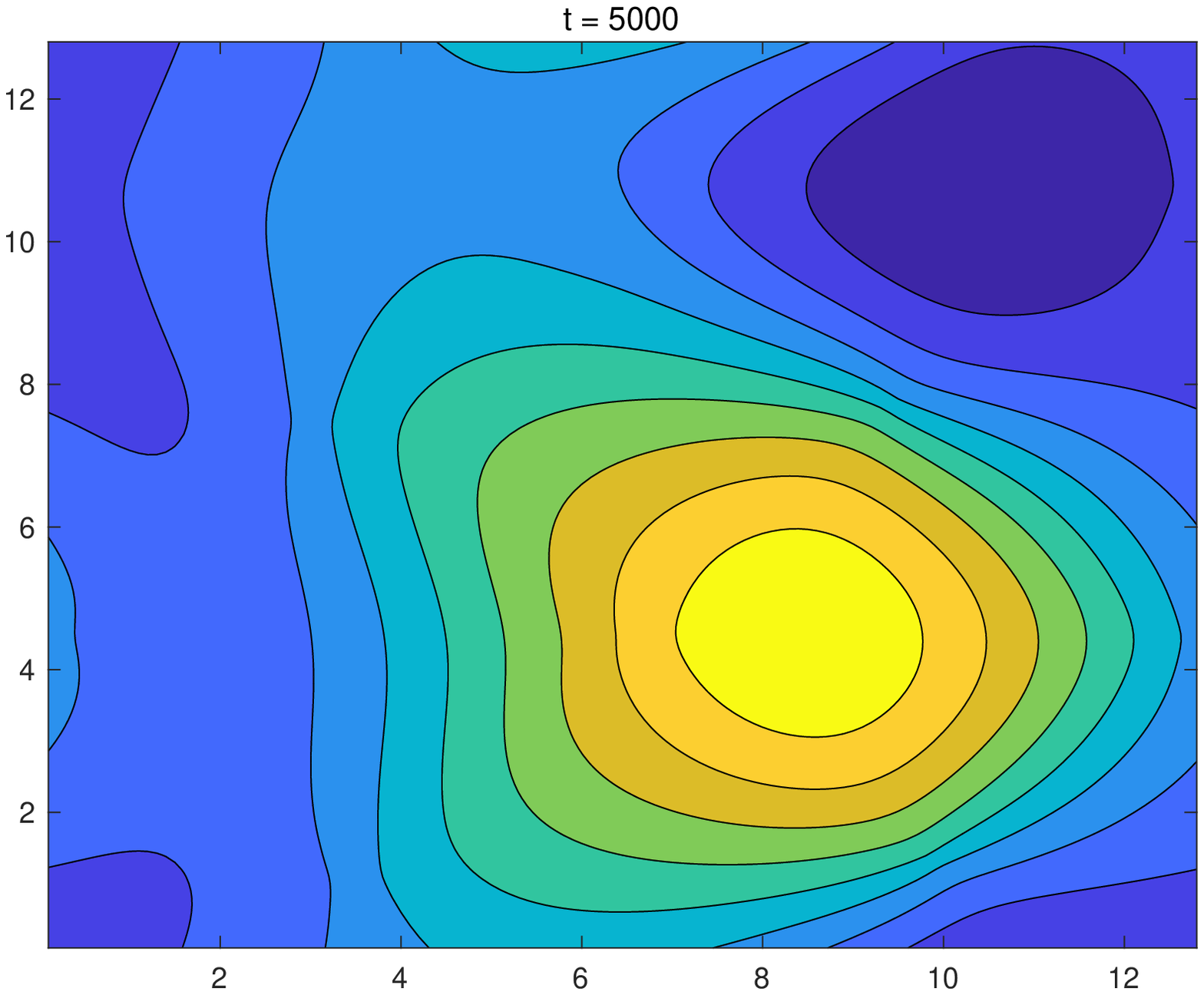}
		\end{minipage}
		\begin{minipage}{0.3\textwidth}
			\includegraphics[width=\textwidth]{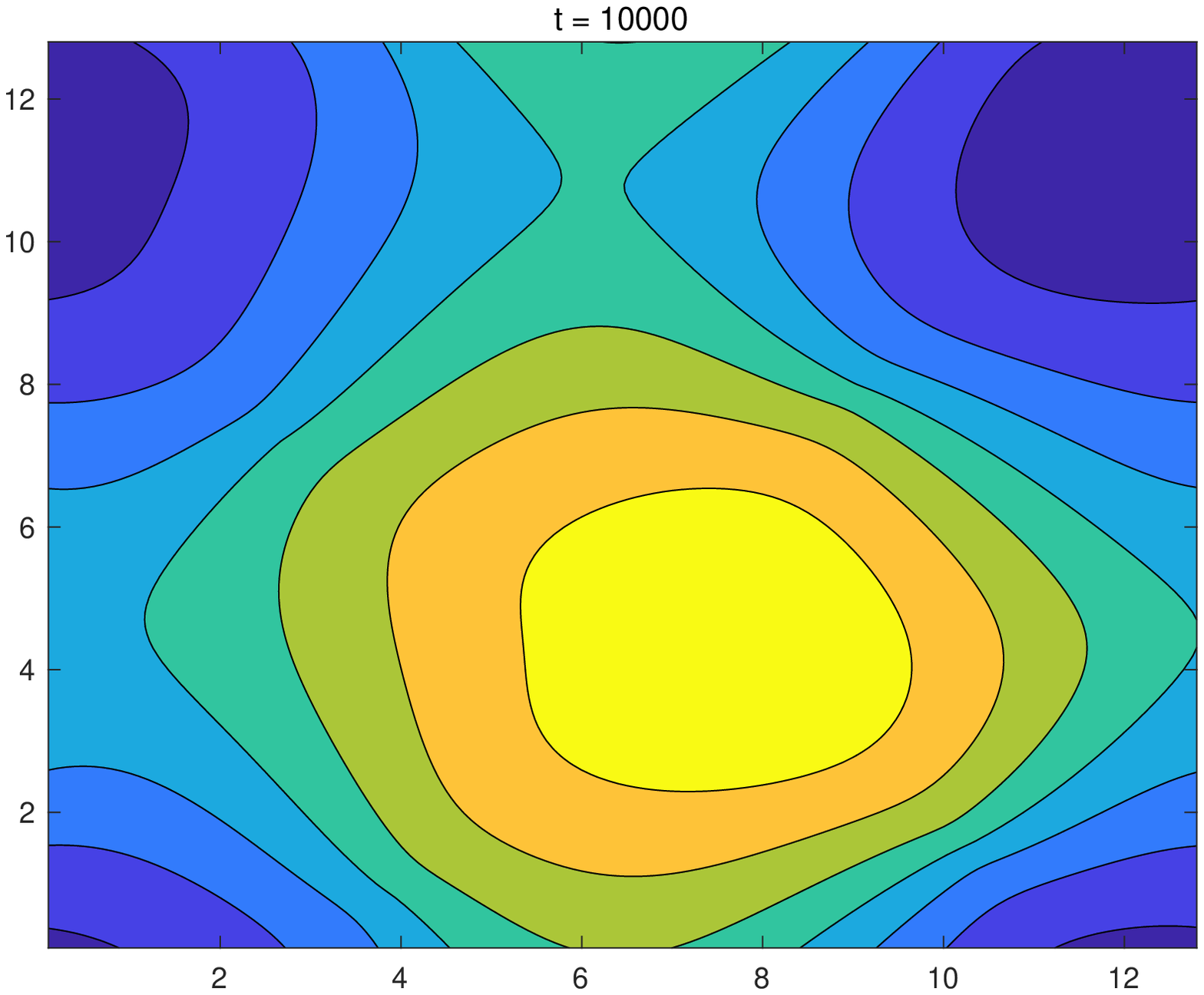}
		\end{minipage}
	}
	\noindent\makebox[\textwidth][c] {
		\begin{minipage}{0.3\textwidth}
			\includegraphics[width=\textwidth]{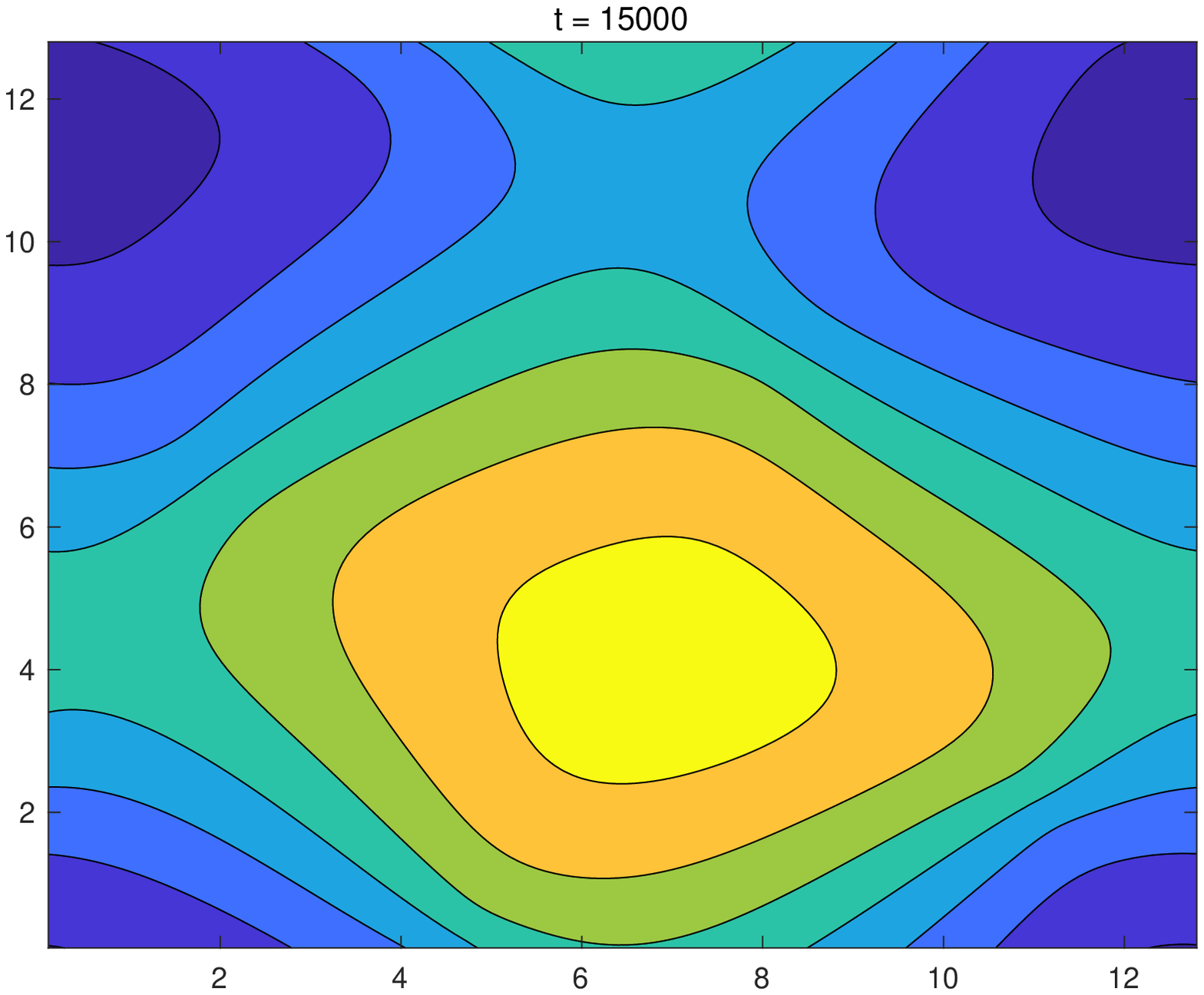}
		\end{minipage}
		\begin{minipage}{0.3\textwidth}
			\includegraphics[width=\textwidth]{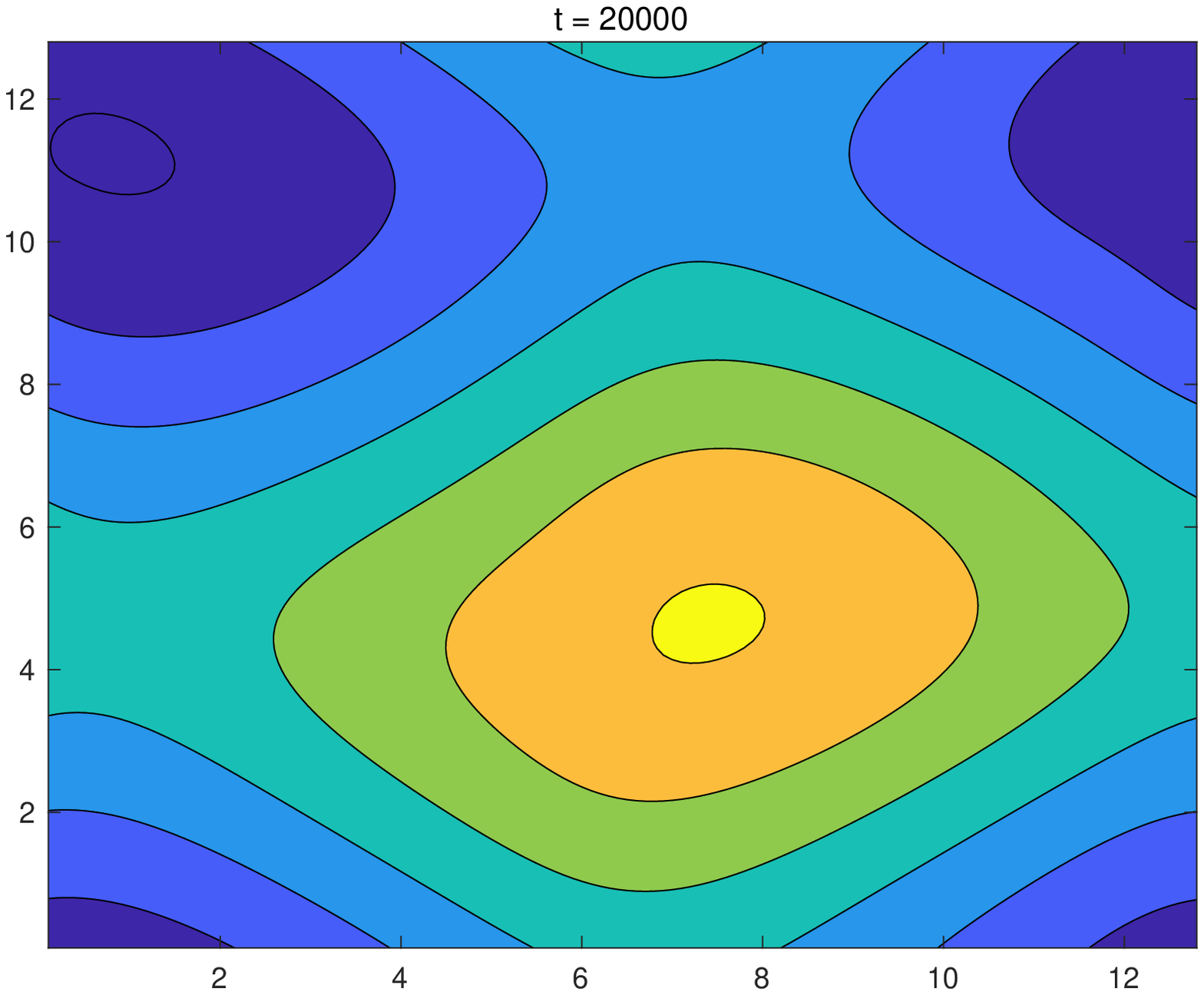}
		\end{minipage}
		\begin{minipage}{0.3\textwidth}
			\includegraphics[width=\textwidth]{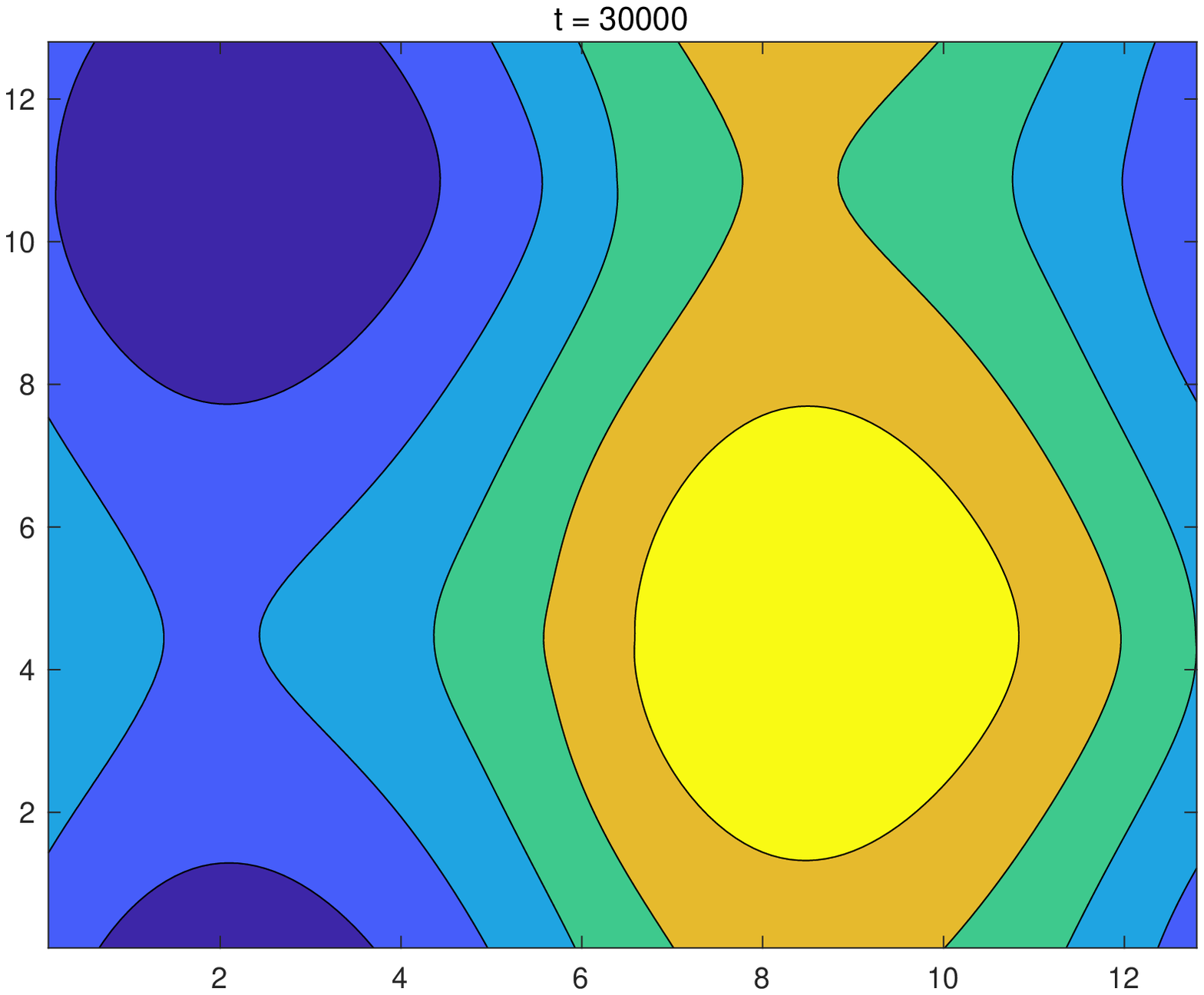}
		\end{minipage}
	}
	\caption{Differences of the numerical solutions of scheme \eqref{eq:etdms nss} between $A=10$ and  $A= \frac{27\left(1+\bm{\overline{C}}_1\right)^4}{512}$ at the same time level for variable time step-size: $\tau=10^{-6}$ for $t\le 1$, $\tau=10^{-5}$ for $1<t\le 10$, $\tau=10^{-4} $ for $10<t\le 100$ and $\tau=10^{-3}$ for $t>100$. The magnitudes of relative difference measured by discrete $L^2$-norm in these six snapshots are  $10^{-11}, ~10^{-5}, ~10^{-5}, ~10^{-4}, ~10^{-5},  ~10^{-5}$ respectively. The magnitude of numerical solution at time $t=1$ is $1$ and $100$ for the rest.} \label{fig: re}
\end{figure}

 \begin{figure}[ht]
 	\begin{center}
 		\includegraphics[width =0.8\textwidth]{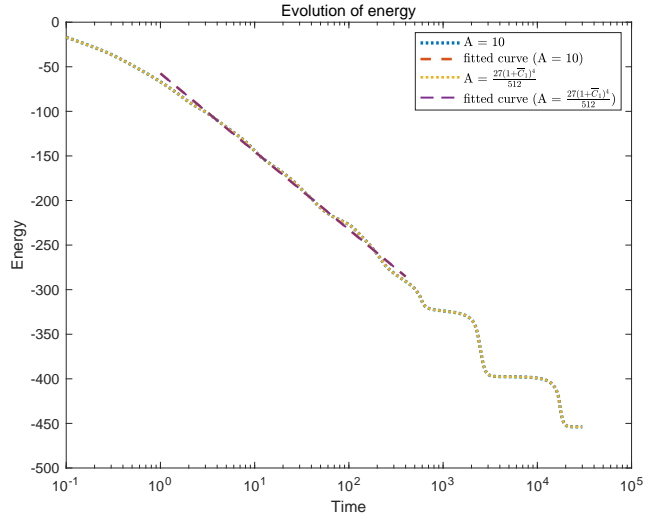}
 		\caption{Semi-log plot of the energy $E$ of scheme scheme \eqref{eq:etdms nss} with $\tau=10^{-6}$ for $t\le 1$, $\tau=10^{-5}$ for $1<t\le 10$, $\tau=10^{-4} $ for $10<t\le 100$ and $\tau=10^{-3}$ for $t>100$. Fitted line has the form $a \ln (t) + b$, with coefficients $a = -38.01$, $b = -57.48$ for both $A=10$ and $A= \frac{27\left(1+\bm{\overline{C}}_1\right)^4}{512}$.} \label{fig: energy1}
 	\end{center}
 \end{figure}

 \begin{figure}[ht]
 	\begin{center}
 		\includegraphics[width=0.8\textwidth]{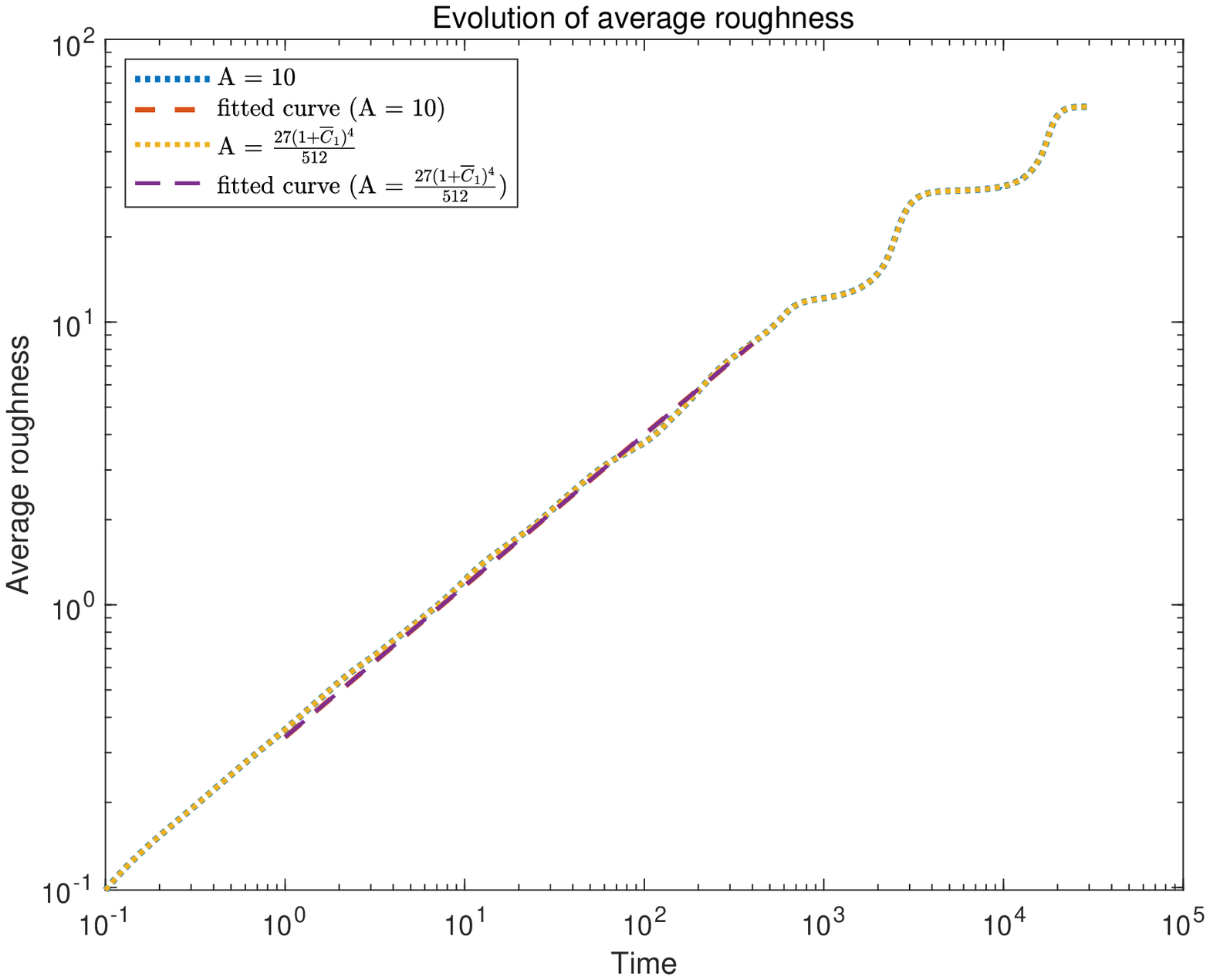}
 		\caption{The log-log plot of the average surface roughness $h$  of \eqref{eq:etdms nss} with $\tau=10^{-6}$ for $t\le 1$, $\tau=10^{-5}$ for $1<t\le 10$, $\tau=10^{-4} $ for $10<t\le 100$ and $\tau=10^{-3}$ for $t>100$. Fitted lines have the form $a t^b$, with coefficients $a = 0.3404$, $b = 0.5349$ for both $A=10$ and $A= \frac{27\left(1+\bm{\overline{C}}_1\right)^4}{512}$.} \label{fig: roughness1}
 	\end{center}
 \end{figure}
 
  \begin{figure}[ht]
 	\begin{center}
 		\includegraphics[width=0.8\textwidth]{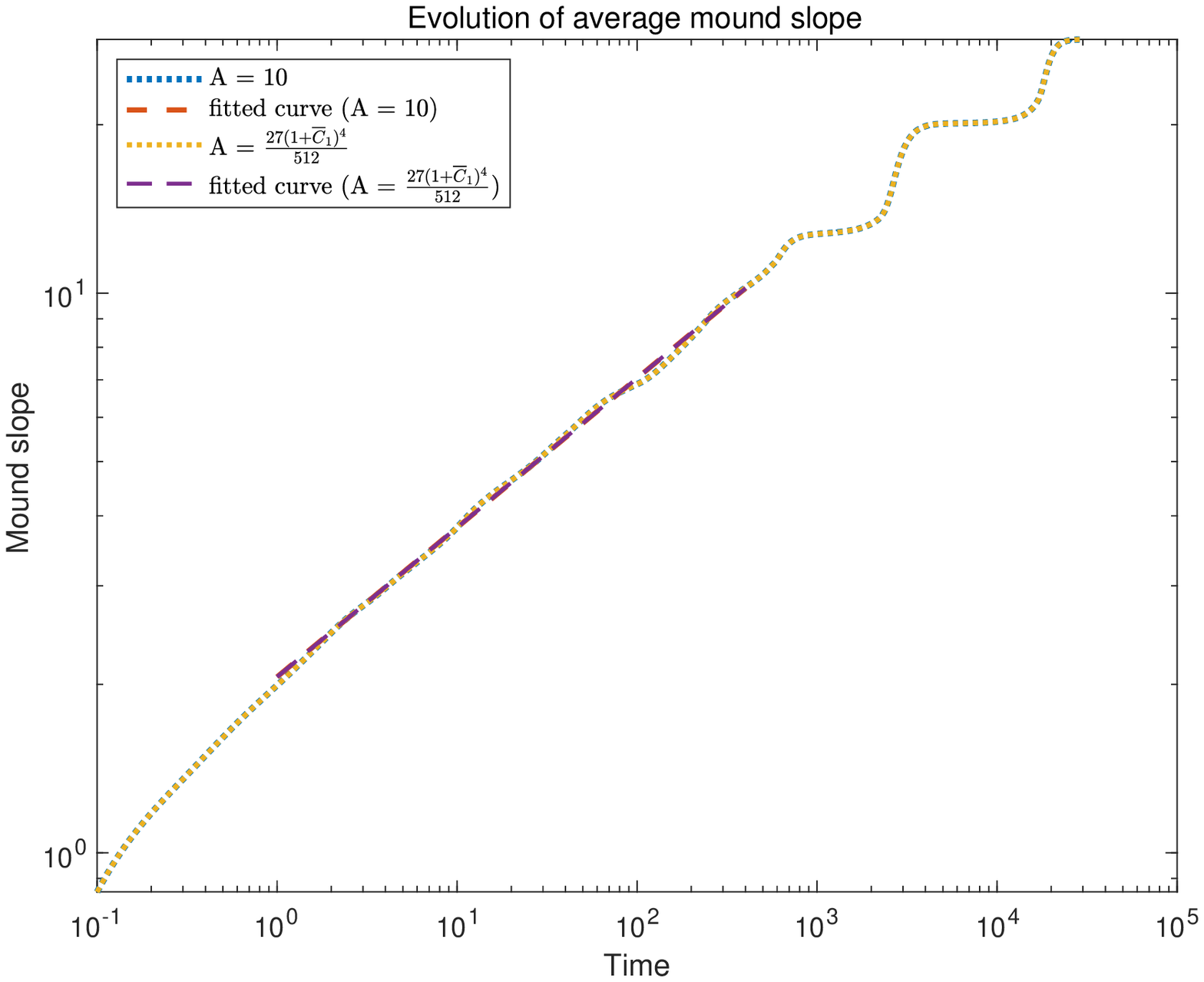}
 		\caption{The log-log plot of the average slope $m$ of \eqref{eq:etdms nss} with $\tau=10^{-6}$ for $t\le 1$, $\tau=10^{-5}$ for $1<t\le 10$, $\tau=10^{-4} $ for $10<t\le 100$ and $\tau=10^{-3}$ for $t>100$. Fitted lines have the form $a t^b$, with coefficients $a = 2.062$, $b = 0.2669$ for both  $A=10$ and $A= \frac{27\left(1+\bm{\overline{C}}_1\right)^4}{512}$.} \label{fig: slope1}
 	\end{center}
 \end{figure}

\section{Conclusions and remarks} \label{sec:conclusion}
A highly efficient $k^{th}$ order accurate and linear numerical scheme is proposed for gradient flows with mild nonlinearity by utilizing ETD and MS methods together with Lagrange interpolation and stabilization. The nonlinear term is treated explicitly and a $k^{th}$ order Dupont-Douglas type regularization $A\tau^k\frac{\partial }{\partial t}\cL^{p(k)} u$ is added for stability. The constant $A$ is independent of the small parameter $\epsilon$ or the discretization in space or in time. Lipschitz continuity for the nonlinear term is assumed. The instability caused by the explicit treatment can be totally overcome with the aid of regularization. The exponent $p(k)=\frac{(\beta+\gamma)k}{2}$ is determined explicitly by Lipschitz condition and the order of the scheme. As an example, a fourth order ETD-MS method is applied to a thin film epitaxy model without slope selection, some numerical experiments have been presented to validate the fourth order convergence in time and the unconditional long-time energy stability.
The method can be applied to a wide range of gradient flows after proper ``preparation'' of the original equation.

It is easy to see that our method can be generalized to the case of variable-step without any difficulty. All we need to do is to use a more general Lagrange interpolation polynomial. This opens a door to the development of higher order in time efficient time-adaptive strategies \cite{LTX16}. The Lagrange interpolation polynomials could be replaced by other appropriate interpolations so long as the near decomposition of the identity and the bounded properties are satisfied. The details together with the convergence analysis will be reported in a subsequent work.

	\section*{Acknowledgements}
	This work is supported in part by the following grants: NSFC12071090,  { Shanghai Science and Technology Research Program 19JC1420101} and a 111 project B08018 (W. Chen), NSFC11871159, Guangdong Provincial Key Laboratory for Computational Science and Material Design 2019B030301001 (X. Wang). The authors thank the anonymous reviewers for their comments and suggestions.

	\bibliographystyle{siam}

	\end{document}